\newtheorem{theorem}{Theorem}
\newtheorem{corollary}[theorem]{Corollary}
\newtheorem{definition}[theorem]{Definition}
\newtheorem{lemma}[theorem]{Lemma}
\newtheorem{proposition}[theorem]{Proposition}
\newenvironment{proof}[1][Proof]{\noindent\textbf{#1.} }{\ \rule{0.5em}{0.5em}}
\begin{document}

\title{\textbf{Co-radiant set-valued mappings}}
\author{\textbf{Abelardo Jord\'{a}n} \\
Departamento Acad\'{e}mico de Ciencias - Secci\'{o}n Matem\'{a}ticas\\
Pontificia Universidad Cat\'{o}lica del Per\'{u}\\
Peru\\
ajordan@pucp.edu.pe \and \textbf{Juan Enrique Mart\'{\i}nez-Legaz} \\
Department d'Economia i d'Hist\`{o}ria Econ\`{o}mica\\
Universitat Aut\`{o}noma de Barcelona;\\
Barcelona Graduate School of Mathematics (BGSMath)\\
Spain\\
\texttt{JuanEnrique.Martinez.Legaz@uab.es} \and \textit{Dedicated to
Alexander Ioffe}}
\maketitle

\begin{abstract}
\textit{\noindent }We introduce and study a notion of co-radiantness for
set-valued mappings between nonnegative orthants of Euclidean spaces. We
analyze them from an abstract convexity perspective. Our main results
consist in representations, in terms of intersections of graphs, of the
increasing co-radiant mappings that take closed normal values, by means of
elementary members in the class of such mappings.\medskip

\textit{Keywords. }Co-radiantness, abstract convexity, set-valued mapping,
normal set\medskip

\textit{2010 Mathematics Subject Classification: }26E25, 47H04
\end{abstract}

\section{Introduction}

In this work we introduce and study a notion of co-radiantness for
set-valued mappings from $\mathbb{R}_{+}^{n}$ into $\mathbb{R}_{+}^{m}.$ The
main aim is to extend the results of \cite{Martinez} on single-valued
increasing co-radiant functions, especially their abstract convexity
representations by elementary functions. Such functions arise in economic
theory, where they are used to model single output production with
decreasing returns to scale \cite{I02}. Similarly, our co-radiant set-valued
vector mappings have obvious potential applications to model economies of
scale in the multiple output case, which have been considered in \cite{PW77}%
. Having such an application in mind, we will restrict our attention to
set-valued mappings taking only normal values, a quite natural property when
dealing with production technologies. Moving from the single-valued case to
a set-valued vector setting provides much more flexibility to model
production technologies. Our approach departs from the classical one in
production theory, which is mostly based on ordinary convexity, in that our
main tools belong to abstract convexity theory. We will assume the values of
our mappings to be normal, but not necessarily convex, thus making our
approach radically different from the one based on convex processes \cite%
{R67, N82}.

The results we present in this paper constitute nontrivial extensions of the
existing ones on single-valued functions. They belong to an abstract
convexity framework; our main contribution consists in identifying suitable
elementary mappings within the class of increasing co-radiant set-valued
mappings which generate the whole class when taking intersections of their
graphs. For simplicity, our developments are presented in a finite
dimensional setting, but extensions to real topological vector spaces, as
those in \cite{DM09} for the single-valued case, are certainly possible.

The structure of the paper is as follows. In Section 2 we present the
fundamental definitions and results our developments will be based on. We
recall there the main notions of abstract convexity and set-valued mappings
we need. In Section 3 we introduce and study co-radiant set-valued mappings.
Section 4 contains our main results on the representation of increasing
co-radiant set-valued mappings by means of suitably defined elementary
members in the class of such mappings.

\bigskip

\textbf{Notations}

\bigskip

We mostly follow the standard notation in set-valued analysis \cite{Aubin}
and abstract convexity theory \cite{Rubinov}. We denote by $\mathbb{R}%
_{+}^{n}$ the nonnegative orthant in $\mathbb{R}^{n}.$ For $x,y\in \mathbb{R}%
_{+}^{n},$ by $x\leq y$ we mean that $y-x\in \mathbb{R}_{+}^{n}.$ The
closure and the interior of a set $C$ are $\overline{C}$ and $int(C),$
respectively. We set $\mathbb{R}_{++}^{n}:=int\left( \mathbb{R}%
_{+}^{n}\right) .$ The domain and the graph of a set-valued mapping $F:%
\mathbb{R}_{+}^{n}\rightrightarrows \mathbb{R}_{+}^{m}$ are $dom(F):=\left\{
x\in \mathbb{R}_{+}^{n}:F\left( x\right) \neq \emptyset \right\} $ and $%
gr(F):=\left\{ \left( x,y\right) \in \mathbb{R}_{+}^{n}\times \mathbb{R}%
_{+}^{m}:y\in F\left( x\right) \right\} ,$ respectively. By $\left\Vert
\cdot \right\Vert _{\infty }$ we denote the maximum norm, defined on $%
\mathbb{R}^{n}$ by $\left\Vert x\right\Vert _{\infty
}:=\max_{i=1,...,n}\left\vert x_{i}\right\vert $ for $x:=\left(
x_{1},...,x_{n}\right) .$ Its associated unit ball is $\mathbf{B}_{\infty }.$

\section{Preliminaries}

Our notion of co-radiant set-valued function is a generalization of the
corresponding one for single-valued functions, which is in turn based on the
fundamental concepts of radiant and co-radiant sets.

\begin{definition}
\cite{RubinovDemyanov} A nonempty set $C\subset \mathbb{R}_{+}^{n}$ is called

\begin{itemize}
\item[(i)] $\mathbf{radiant}$, if $x\in C,t\in $ $(0,1]\Longrightarrow $ $%
tx\in C.$

\item[(ii)] $\mathbf{co}$\textbf{-}$\mathbf{radiant}$, if $x\in C,\lambda
\geq 1$ $\Longrightarrow $ $\lambda x\in C.$
\end{itemize}
\end{definition}

\begin{definition}
\label{ContieneFuncionInvCoradiant}\cite{Rubinov, Rubinov2} A function $f:%
\mathbb{R}_{+}^{n}\rightarrow \mathbb{R}_{+}\mathbb{\cup }\left\{ +\infty
\right\} $ is called $\mathbf{co}$\textbf{-}$\mathbf{radiant}$, if 
\begin{equation*}
f(tx)\geq tf(x),\hspace{1cm}\forall x\in \mathbb{R}_{+}^{n},t\in (0,1].
\end{equation*}
\end{definition}

It is easy to see that $f$ is co-radiant if, and only if,%
\begin{equation*}
f(\lambda x)\leq \lambda f(x),\forall x\in \mathbb{R}_{+}^{n},\lambda \geq 1.
\end{equation*}%
It is also a simple exercise to prove that a function $f\not\equiv +\infty $
is co-radiant if, and only if, the set $hypo(f):=\{(x,\alpha )\in \mathbb{R}%
_{+}^{n}\times \mathbb{R}_{+}:\,f(x)\geq \alpha \}$ is radiant. Co-radiant
functions with the additional property of being increasing have been
intensively studied in \cite{Rubinov2}, where applications to a class of
optimization problems are discussed.

\begin{definition}
\label{DefCoNormal}(see \cite[p. 106]{RG98}). A set $C\subset \mathbb{R}%
_{+}^{n}$ is called$\mathbf{\ normal}$ if it satisfies the following
property: 
\begin{equation*}
x\in C,\,0\leq y\leq x\Rightarrow y\in C.
\end{equation*}
\end{definition}

We next recall the basic properties of normal sets (see \cite{Tuy1}, \cite%
{Rubinov} and \cite{Rubinov_Singer}):

\begin{proposition}
\label{propiedades}\hfill {}
\end{proposition}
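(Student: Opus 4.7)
The plan is to verify each of the listed items by directly invoking the defining implication $x\in C,\ 0\leq y\leq x\Rightarrow y\in C$ of Definition~\ref{DefCoNormal}. The purely set-theoretic closure properties---the intersection and the union of an arbitrary family of normal sets are again normal---are immediate: if $x\in\bigcap_{\alpha}C_{\alpha}$ (respectively $x\in\bigcup_{\alpha}C_{\alpha}$) and $0\leq y\leq x$, then $x$ lies in each (respectively some) $C_{\alpha}$, so normality of the $C_{\alpha}$ gives $y\in C_{\alpha}$, whence $y$ belongs to the intersection (respectively union). Similarly, the representation $C=\bigcup_{x\in C}[0,x]$ reduces to observing that ``$\supseteq$'' is exactly the definition of normality while ``$\subseteq$'' is trivial since $x\in[0,x]$.

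For the topological closure I would argue by approximation: given $x\in\overline{C}$ and $0\leq y\leq x$, pick $x_{k}\in C$ with $x_{k}\to x$ and set $y_{k}:=\min(y,x_{k})$ componentwise. Then $0\leq y_{k}\leq x_{k}$, so $y_{k}\in C$ by normality of $C$; since componentwise minimum is continuous and $\min(y,x)=y$, we obtain $y_{k}\to y$, whence $y\in\overline{C}$. The same truncation idea, combined with Cartesian products of intervals instead of balls, handles the interior: an open box around $x$ contained in $C$ produces, by shrinking coordinates towards $y$, an open box around $y$ whose elements are dominated componentwise by points of the original box, hence lie in $C$ by normality.

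The step I expect to be the main obstacle is precisely this interior claim, because it is sensitive to whether one interprets the interior in $\mathbb{R}^{n}$ or relative to $\mathbb{R}_{+}^{n}$: if $y$ has zero coordinates, no Euclidean open ball around $y$ lies in the orthant, so one must either restrict the statement to $x,y\in\mathbb{R}_{++}^{n}$ or rework the argument with boxes adapted to the faces of the orthant. Once the convention matches that of the cited sources \cite{Tuy1,Rubinov,Rubinov_Singer}, the remaining items of the proposition---including monotonicity with respect to scalar multiplication by $t\in[0,1]$, preservation under Cartesian products, and the fact that a section of a normal set by a coordinate hyperplane is normal---follow by the same elementary pattern of truncating coordinates and invoking Definition~\ref{DefCoNormal}, so I would dispatch them in a single routine paragraph at the end.
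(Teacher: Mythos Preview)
The paper does not supply a proof of this proposition at all: it is presented as a recollection of known facts with references to \cite{Tuy1}, \cite{Rubinov}, \cite{Rubinov_Singer}, and no argument follows. So there is no ``paper's own proof'' to compare your proposal against.

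That said, your proposal does not match the actual content of the proposition. The items listed are: (i) $\mathbb{R}_{+}^{n}$ and $\emptyset$ are normal; (ii) each order interval $[0,y]$ is normal; (iii) arbitrary unions and intersections of normal sets are normal; (iv) the closure of a normal set is normal; and (v) for normal $C$, the equivalence $C\cap\mathbb{R}_{++}^{n}\neq\emptyset \Leftrightarrow \mathrm{int}(C)\neq\emptyset$. Your writeup handles (iii) and (iv) correctly, but then drifts: the representation $C=\bigcup_{x\in C}[0,x]$, stability under $t\in[0,1]$ scaling, Cartesian products, and coordinate-hyperplane sections are not among the stated items, and you never treat (i) or (ii). More importantly, you misread item (v) as ``the interior of a normal set is normal'' and spend the bulk of your discussion on that claim; the actual statement is the equivalence above, whose nontrivial direction is that a single point $x\in C\cap\mathbb{R}_{++}^{n}$ forces $\mathrm{int}(C)\neq\emptyset$ (via $[0,x]\subset C$ having nonempty interior in $\mathbb{R}^{n}$), together with the observation that any interior point of a subset of $\mathbb{R}_{+}^{n}$ must lie in $\mathbb{R}_{++}^{n}$. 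Your worry about relative versus absolute interior is pertinent to (v), but the argument you sketch is aimed at the wrong target.
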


\begin{itemize}
\item[(i)] Both $\mathbb{R}_{+}^{n}$ and $\emptyset $ are normal sets.

\item[(ii)] For any $y\in \mathbb{R}_{+}^{n}$, the set $[0,y]:=\{x\in 
\mathbb{R}_{+}^{n}:\,x\leq y\}$ is normal.

\item[(iii)] If $\{C_{i}\}_{i\in I}$ is an arbitrary collection of normal
sets in $\mathbb{R}_{+}^{n}$, then $\displaystyle\dbigcup\limits_{i\in
I}C_{i}$ and $\displaystyle\dbigcap\limits_{i\in I}C_{i}$ are normal sets,
too.

\item[(iv)] If $C$ is a normal set in $\mathbb{R}_{+}^{n}$, then $\overline{C%
}$ is normal, too.

\item[(v)] If $C$ is a normal set in $\mathbb{R}_{+}^{n}$, then the
following equivalence holds true:%
\begin{equation*}
C\cap \mathbb{R}_{++}^{n}\neq \emptyset \Leftrightarrow int(C)\neq \emptyset
.
\end{equation*}
\end{itemize}

As in \cite{Rubinov_Singer}, we consider the coupling function $\left\langle
\cdot ,\cdot \right\rangle :$ $\mathbb{R}_{+}^{n}\times \mathbb{R}%
_{+}^{n}\rightarrow \mathbb{R}_{+}$ defined by 
\begin{equation*}
\langle \ell ,x\rangle :=\left\{ 
\begin{array}{ll}
\displaystyle\min_{i\in I_{+}(\ell )}\ell _{i}x_{i}, & \hbox{ if }\ell \neq 0
\\ 
0, & \hbox{ if }\ell =0,%
\end{array}%
\right.
\end{equation*}%
with $\ell =(\ell _{1},\cdots \ell _{n})$ and $I_{+}(\ell )=\left\{ i\in
\left\{ 1,\cdots ,n\right\} :\ell _{i}>0\right\} $. Functions of the type $%
\langle \ell ,\cdot \rangle $ are called $min-type$ functions. Their level
sets $\{x\in \mathbb{R}_{+}^{n}:\,\langle \ell ,x\rangle \leq \alpha \}$ ($%
\alpha \geq 0$) are closed normal sets. From the following proposition it
immediately follows that every closed normal set is the intersection of a
collection of such level sets.

\begin{proposition}
\label{SeparNormalCerr}\cite[Proposition 2.3]{Rubinov_Singer} For a subset $%
C $ of $\mathbb{R}_{+}^{n}$, the following conditions are equivalent:

\begin{itemize}
\item[(i)] $C$ is normal and closed.

\item[(ii)] For each $x\in \mathbb{R}_{+}^{n}\setminus C$ there exists $\ell
\in \mathbb{R}_{+}^{n}$ such that 
\begin{equation*}
\langle \ell ,y\rangle \leq 1<\langle \ell ,x\rangle ,\hspace{0.4cm}\forall
y\in C.
\end{equation*}
\end{itemize}
\end{proposition}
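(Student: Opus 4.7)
The plan is to prove the two implications separately, with the harder direction being $(i) \Rightarrow (ii)$.

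For $(ii) \Rightarrow (i)$, I would argue that the hypothesis expresses $C$ as an intersection of sublevel sets: specifically, $C = \bigcap_{\ell \in L} \{y \in \mathbb{R}^n_+ : \langle \ell, y\rangle \leq 1\}$, where $L$ is the family of all $\ell \in \mathbb{R}^n_+$ such that $\langle \ell, y\rangle \leq 1$ for every $y \in C$. The inclusion $\subseteq$ is built into the definition of $L$, while $\supseteq$ follows because any $x \notin C$ is by hypothesis strictly separated. Since each set $\{y : \langle \ell, y\rangle \leq 1\} = \bigcup_{i \in I_+(\ell)} \{y \in \mathbb{R}^n_+ : y_i \leq 1/\ell_i\}$ is a finite union of closed normal sets, Proposition \ref{propiedades}(iii) together with (iv) (or directly (iii)) shows that $C$ is closed and normal.

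For $(i) \Rightarrow (ii)$, fix $x \in \mathbb{R}^n_+ \setminus C$. If $x = 0$, then normality forces $C = \emptyset$ (otherwise any $y \in C$ would drag $0 \in C$ via $0 \leq 0 \leq y$), and the statement is vacuous; so assume $x \neq 0$ and set $I := \{i : x_i > 0\}$. By closedness of $C$, choose $\varepsilon > 0$ with $\|y - x\|_\infty \geq \varepsilon$ for all $y \in C$, and shrink $\varepsilon$ so that $\varepsilon < x_i$ for every $i \in I$. The key construction is to replace any $y \in C$ by
\[
\hat{y}_i := \min(y_i, x_i + \varepsilon/2) \text{ for } i \in I, \qquad \hat{y}_i := \min(y_i, \varepsilon/2) \text{ for } i \notin I.
\]
Since $\hat{y} \leq y$ and $C$ is normal, $\hat{y} \in C$.

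The point of this truncation is that $|\hat{y}_i - x_i| \leq \varepsilon/2 < \varepsilon$ for every $i \notin I$, so the lower bound $\|\hat{y} - x\|_\infty \geq \varepsilon$ must be realized at some coordinate $i \in I$; combined with $\hat{y}_i \leq x_i + \varepsilon/2$, this forces $\hat{y}_i \leq x_i - \varepsilon$, and then necessarily $\hat{y}_i = y_i$, so $y_i \leq x_i - \varepsilon$. Now define $\ell \in \mathbb{R}^n_+$ by $\ell_i := 1/(x_i - \varepsilon/2)$ for $i \in I$ and $\ell_i := 0$ otherwise, so that $I_+(\ell) = I$. Then $\langle \ell, x \rangle = \min_{i \in I} x_i/(x_i - \varepsilon/2) > 1$, and for every $y \in C$ the coordinate $i \in I$ just produced gives $\ell_i y_i \leq (x_i - \varepsilon)/(x_i - \varepsilon/2) < 1$, hence $\langle \ell, y\rangle \leq 1$, as required.

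The main obstacle is the subtle interplay between the coordinates where $x_i = 0$ and those where $x_i > 0$: the coupling $\langle \ell, \cdot \rangle$ only sees the support $I_+(\ell)$, so one cannot simply place $\ell_i > 0$ at a coordinate where $x_i = 0$ and hope to separate. Normality is precisely what lets us truncate $y$ down to $\hat{y}$ in the irrelevant coordinates so that the $\varepsilon$-gap is forced to appear, with the correct sign, at a coordinate lying in $I$.
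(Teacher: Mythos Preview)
The paper does not give its own proof of this proposition; it simply cites \cite[Proposition~2.3]{Rubinov_Singer}. Your argument is therefore a self-contained proof where the paper offers none, and both directions are essentially correct.

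One small wrinkle: in the case $x=0$ you correctly deduce that $C=\emptyset$, but the conclusion ``the statement is vacuous'' is not quite right under the natural reading of (ii). The displayed condition requires both $\langle\ell,y\rangle\le 1$ for all $y\in C$ (which is vacuous) \emph{and} $1<\langle\ell,x\rangle$, and the latter is impossible when $x=0$ since $\langle\ell,0\rangle=0$ for every $\ell$. This is really a defect of the statement as quoted (the equivalence fails for $C=\emptyset$) rather than of your proof; the original source presumably assumes $C\neq\emptyset$, which is also how the result is used later in the paper. Apart from this boundary case, your truncation argument for $(i)\Rightarrow(ii)$ is clean and your computation of the sublevel sets as finite unions of closed normal half-spaces for $(ii)\Rightarrow(i)$ is correct.
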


The folllowing notion of support function, which is useful for dealing with
closed normal sets, was essentially introduced in \cite[p. 108]{RG98}.

\begin{definition}
The\textbf{\ support function }of\textbf{\ }$C\subset \mathbb{R}_{+}^{n}$ is 
$\sigma _{C}$:$\mathbb{R}_{+}^{n}\rightarrow \mathbb{R}_{+}\cup \left\{
+\infty \right\} $, defined by 
\begin{equation*}
\displaystyle\sigma _{C}(\ell ):=\sup \{\langle \ell ,x\rangle \;:\;x\in C\},
\end{equation*}%
with the convention $\sup \emptyset =0.$
\end{definition}

We next present some basic properties of $\sigma _{C}$.

\begin{proposition}
\label{properties sigma}For every $C\subset \mathbb{R}_{+}^{n}$, one has:

\begin{itemize}
\item[(i)] $\sigma _{C}(0)=0.$

\item[(ii)] $\sigma _{C}$ is positively homogeneous, that is, for every $%
\alpha >0$ and $\ell \in \mathbb{R}_{+}^{n}$ one has $\sigma _{C}(\alpha
\ell )=\alpha \sigma _{C}(\ell ).$

\item[(iii)] $\sigma _{C}$ is increasing.

\item[(iv)] $\sigma _{C}$ is lower semicontinuous.

\item[(v)] $\displaystyle\sigma _{C}=\sigma _{\overline{C}}.$

\item[(vi)] If $\lambda >0$ is positive, then $\sigma _{\lambda C}=\lambda
\sigma _{C}.$

\item[(vii)] If $A$ and $B$ are nonempty subsets of $\mathbb{R}_{+}^{n}$,
then%
\begin{equation}
\sigma _{A+B}\geq \sigma _{A}+\sigma _{B}.  \label{sop suma}
\end{equation}

\item[(viii)] If $A$ and $B$ are subsets of $\mathbb{R}_{+}^{n}$ and $B$ is
normal and closed, the following equivalence holds true:%
\begin{equation*}
A\subset B\Leftrightarrow \sigma _{A}\leq \sigma _{B}.
\end{equation*}
\end{itemize}
\end{proposition}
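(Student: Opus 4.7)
The plan is to dispatch the eight parts in three natural groups, going from immediate-from-definition items up to the one item that genuinely needs structural input.

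First I would treat the purely definitional items (i), (ii), and (vi). For (i), $\langle 0,x\rangle =0$ for every $x$ by the case convention, so the sup is $0$. For (ii), since $I_{+}(\alpha \ell )=I_{+}(\ell )$ whenever $\alpha >0$, the factor $\alpha $ pulls out of the min, giving $\langle \alpha \ell ,x\rangle =\alpha \langle \ell ,x\rangle $, and the sup over $x\in C$ preserves this scaling. For (vi), the substitution $x=\lambda y$ in the sup defining $\sigma _{\lambda C}(\ell )$ immediately produces $\lambda \sigma _{C}(\ell )$.

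Next I would handle the monotonicity/regularity triple (iii), (iv), (v). For (iii) I would observe that, for a fixed $x$, the min-type coupling $\langle \cdot ,x\rangle $ respects the order on $\ell $ (taking some care about how the index set $I_{+}(\ell )$ evolves as $\ell $ grows), and this inequality passes through the supremum over $x\in C$. For (v) the key fact is that $\langle \ell ,\cdot \rangle $ is continuous in $x$ as a minimum of linear functions, so each point of $\overline{C}$ is approximated from within $C$ by points along which the coupling converges, forcing $\sigma _{C}=\sigma _{\overline{C}}$. For (iv) I would write $\sigma _{C}(\ell )=\sup_{x\in C}\langle \ell ,x\rangle $ as a pointwise supremum of the family $\{\langle \cdot ,x\rangle \}_{x\in C}$ and use that a supremum of lower semicontinuous functions is lower semicontinuous; this reduces (iv) to lsc in $\ell $ of each individual $\langle \cdot ,x\rangle $.

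Finally, (vii) follows from the elementary componentwise inequality $\min_{i}\ell _{i}(a_{i}+b_{i})\geq \min_{i}\ell _{i}a_{i}+\min_{i}\ell _{i}b_{i}$ (sum of minima is dominated by minimum of sums), applied at each $(a,b)\in A\times B$ and then taking the sup in $a$ and $b$ separately. The real work, and what I expect to be the main obstacle, is the nontrivial direction of (viii). Its forward implication $A\subset B\Rightarrow \sigma _{A}\leq \sigma _{B}$ is immediate from the monotonicity of $\sup $ in the index set. For the converse I would argue by contrapositive: if some $x_{0}\in A$ fails to lie in $B$, Proposition \ref{SeparNormalCerr} (this is precisely where the hypotheses that $B$ is normal and closed enter) supplies $\ell \in \mathbb{R}_{+}^{n}$ with $\langle \ell ,y\rangle \leq 1<\langle \ell ,x_{0}\rangle $ for every $y\in B$, so $\sigma _{B}(\ell )\leq 1<\langle \ell ,x_{0}\rangle \leq \sigma _{A}(\ell )$, contradicting $\sigma _{A}\leq \sigma _{B}$. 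Thus only (viii) actually invokes the separation theorem, while the other seven items reduce to direct computations with the min-type coupling.
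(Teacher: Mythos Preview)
Your plan mirrors the paper's proof essentially line for line: (i), (ii), (iii), (vi) are dismissed as definitional; (iv) and (v) are attributed to continuity properties of the min-type coupling; (vii) to its superadditivity; and the nontrivial direction of (viii) is handled via the separation result Proposition~\ref{SeparNormalCerr}, exactly as in the paper. Your contrapositive for (viii) is the paper's argument verbatim.

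One caution on the reduction you state for (iv) (and implicitly for (iii)): the individual maps $\ell \mapsto \langle \ell ,x\rangle $ are \emph{not} lower semicontinuous, nor increasing, on all of $\mathbb{R}_{+}^{n}$ once $\ell $ is allowed to hit the boundary. For $n=2$ and $x=(1,2)$ one has $\langle (1,0),x\rangle =1$ while $\langle (1,\varepsilon ),x\rangle =\min (1,2\varepsilon )=2\varepsilon \rightarrow 0$ as $\varepsilon \downarrow 0$; thus $(1,0)\leq (1,\varepsilon )$ yet the coupling drops, and the liminf is $0<1$. Your parenthetical ``taking some care about how the index set $I_{+}(\ell )$ evolves as $\ell $ grows'' is exactly where the difficulty lies, and it does not resolve in the direction you need. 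On $\mathbb{R}_{++}^{n}$ the index set is constant and both reductions go through cleanly; across the boundary they do not. The paper's own proof is equally terse on (iii) and (iv), so your sketch is faithful to it, but the step you isolate for (iv) would fail as written.
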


\begin{proof}
Statements (i) (ii), (iii) and (vi) follow immediately from the definition
of $\sigma _{C}$. Statements (iv) and (v) are easy consequences of the
continuity of the min-type functions $\langle \ell ,\cdot \rangle $.
Statement (vii) follows from the superadditivity of the min-type functions $%
\langle \ell ,\cdot \rangle $. The implication $\Longrightarrow $ in (viii)
is obvious. Conversely, assume that $\sigma _{A}\leq \sigma _{B}.$ Then, if $%
x\in \mathbb{R}_{+}^{n}\setminus B,$ by Proposition \ref{SeparNormalCerr}
there exists $\ell \in \mathbb{R}_{+}^{n}$ such that $\langle \ell ,y\rangle
\leq 1<\langle \ell ,x\rangle $ for every $y\in B;\ $hence $\sigma _{A}(\ell
)\leq \sigma _{B}(\ell )\leq 1<\langle \ell ,x\rangle $, which shows that $%
x\notin A.$ This proves the inclusion $A\subset B.$
\end{proof}

\bigskip

We observe that equality does not necessarily hold in (\ref{sop suma}), even
if $A$ and $B$ are normal. Consider, for instance, in $\mathbb{R}_{+}^{2}$
the segments $A$ and $B$ joining the origin with the points $(1,0)$ and $%
(0,1),$ respectively; for $\ell :=(1,1)$ one has $\displaystyle\sigma
_{A}(\ell )=0,\,\sigma _{B}(\ell )=0$ and $\sigma _{A+B}(\ell )=1.$

\begin{corollary}
If $A$ and $B$ are normal and closed subsets of $\mathbb{R}_{+}^{n},$ the
following equivalence holds true:%
\begin{equation*}
A=B\Leftrightarrow \sigma _{A}=\sigma _{B}.
\end{equation*}
\end{corollary}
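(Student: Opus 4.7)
The plan is to derive this corollary as an immediate double application of Proposition \ref{properties sigma}(viii), which already supplies the essential content. The forward direction $A=B \Rightarrow \sigma_A = \sigma_B$ is trivial, since the support function is defined purely in terms of the set. So the only real work is the converse.

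For the converse, I would note that $\sigma_A = \sigma_B$ entails both $\sigma_A \leq \sigma_B$ and $\sigma_B \leq \sigma_A$. Since $B$ is normal and closed, the first inequality lets me invoke Proposition \ref{properties sigma}(viii) to conclude $A \subset B$. Symmetrically, since $A$ is normal and closed, the second inequality and (viii) give $B \subset A$. Combining the two inclusions yields $A = B$.

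Thus the only step that truly requires justification is already packaged in (viii); the corollary itself is just a symmetric restatement for the case when both sets are normal and closed. There is no real obstacle here, since the hypothesis of normality and closedness is assumed for both sets, which is exactly what (viii) needs on each side. I would keep the write-up to one or two sentences, explicitly noting that both normality and closedness of $A$ (for one inclusion) and of $B$ (for the other) are used.
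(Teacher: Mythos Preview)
Your proposal is correct and follows exactly the same approach as the paper, which simply states that the result follows from Proposition~\ref{properties sigma}(viii). You have merely spelled out the double application of (viii) that the paper leaves implicit.
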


\begin{proof}
It follows from Proposition \ref{properties sigma}(viii).
\end{proof}

\begin{corollary}
\label{RepresSoporte}A set $C\subset \mathbb{R}_{+}^{n}$ is a normal and
closed if, and only if,%
\begin{equation}
\displaystyle C=\{\,x\in \mathbb{R}_{+}^{n}\,:\langle \ell ,x\rangle \leq
\sigma _{C}(\ell ),\hspace{0.3cm}\forall \ell \in \mathbb{R}_{+}^{n}\,\}.
\label{normal from support}
\end{equation}
\end{corollary}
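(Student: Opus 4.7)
The plan is to prove both implications by leveraging the tools already established, in particular Proposition \ref{properties sigma}(viii) and Proposition \ref{SeparNormalCerr}. Let me denote the set on the right-hand side of (\ref{normal from support}) by $D$, i.e., $D:=\{x\in\mathbb{R}_+^n: \langle \ell,x\rangle\leq \sigma_C(\ell),\ \forall \ell\in\mathbb{R}_+^n\}$.

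For the implication $(\Leftarrow)$, I observe that $D$ is an intersection of sets of the form $L_\ell:=\{x\in\mathbb{R}_+^n : \langle \ell,x\rangle \leq \sigma_C(\ell)\}$. Each $L_\ell$ is a level set of the min-type function $\langle \ell,\cdot\rangle$, which is continuous and increasing; hence $L_\ell$ is closed and normal (a fact already noted in the paragraph preceding Proposition \ref{SeparNormalCerr}, and which follows from the definitions together with Proposition \ref{propiedades}). Since arbitrary intersections of closed sets are closed and, by Proposition \ref{propiedades}(iii), arbitrary intersections of normal sets are normal, the set $D$ is closed and normal; so if $C=D$, then $C$ is closed and normal.

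For the implication $(\Rightarrow)$, assume $C$ is closed and normal. The inclusion $C\subseteq D$ is immediate from the definition of the support function: for every $x\in C$ and every $\ell\in\mathbb{R}_+^n$ one has $\langle \ell,x\rangle \leq \sup_{y\in C}\langle \ell,y\rangle = \sigma_C(\ell)$. For the reverse inclusion $D\subseteq C$, the cleanest route is via Proposition \ref{properties sigma}(viii). Namely, I compute $\sigma_D$: for every $\ell\in\mathbb{R}_+^n$ and every $x\in D$ one has $\langle \ell,x\rangle\leq \sigma_C(\ell)$ by the very definition of $D$, hence $\sigma_D(\ell)\leq\sigma_C(\ell)$. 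Since $C$ is closed and normal, Proposition \ref{properties sigma}(viii) then yields $D\subseteq C$.

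There is no serious obstacle here; the only point requiring a little care is making sure one invokes (viii) in the correct direction (normality and closedness are required on the set playing the role of the superset, which is $C$). Alternatively, one could prove the contrapositive of $D\subseteq C$ directly: if $x\notin C$, Proposition \ref{SeparNormalCerr} produces $\ell\in\mathbb{R}_+^n$ with $\langle \ell,y\rangle\leq 1<\langle \ell,x\rangle$ for all $y\in C$, whence $\sigma_C(\ell)\leq 1<\langle \ell,x\rangle$ and therefore $x\notin D$. Both routes are short; the first is preferable because it makes the dependence on the earlier material more transparent.
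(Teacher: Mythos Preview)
Your proof is correct and follows essentially the same approach as the paper: for the ``if'' direction you write $D$ as an intersection of closed normal level sets, and for the ``only if'' direction you get $C\subset D$ trivially, observe $\sigma_D\leq\sigma_C$, and invoke Proposition~\ref{properties sigma}(viii) to conclude $D\subset C$. The paper's proof is identical in structure, only more terse.
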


\begin{proof}
The "if" statement is an immediate consequence of Proposition \ref%
{properties sigma}(iv) (closedness of $C)$ together with the fact that the
left hand sides of the inequalities in (\ref{normal from support}) are
increasing (normality of $C).$ To prove the converse, let $D\ $be the right
hand side of (\ref{normal from support}). It is immediate that $C\subset D$
and $\sigma _{D}\leq \sigma _{C}$; from this inequality and Proposition \ref%
{properties sigma}(viii), one gets the opposite inclusion $D\subset C.$
\end{proof}

\section{Increasing Co-radiant (ICR) Set-Valued Mappings}

The notion of increasingness we will use for set-valued mappings is provided
by the following definition.

\begin{definition}
A mapping $F:\mathbb{R}_{+}^{n}\rightrightarrows \mathbb{R}_{+}^{m}$ is
called\textbf{\ increasing} if 
\begin{equation*}
x\leq y\Rightarrow F(x)\subset F(y).
\end{equation*}
\end{definition}

We next extend the notion of co-radiant function \cite{Rubinov} to
set-valued mappings.

\begin{definition}
A mapping $F:\mathbb{R}_{+}^{n}\rightrightarrows \mathbb{R}_{+}^{m}$ with
nonempty graph will be called \textbf{co-radiant}, if%
\begin{equation*}
F(tx)\supset tF(x),\hspace{0.5cm}\forall x\in \mathbb{R}_{+}^{n},t\in (0,1].
\end{equation*}%
\label{casicorad}
\end{definition}

Let us observe that condition (\ref{strongcorad}) can be equivalently
written as 
\begin{equation}
F(\lambda x)\subset \lambda F(x)\hspace{1cm}\forall x\in \mathbb{R}%
_{+}^{n},\lambda \geq 1.  \label{strongcorad}
\end{equation}

The preceding definitions are generalizations of the corresponding ones for
real-valued functions. Indeed, one can associate to $f:\mathbb{R}%
_{+}^{n}\rightarrow \mathbb{R}_{+}\cup \left\{ +\infty \right\} $ the
set-valued mapping $\left[ f\right] :\mathbb{R}_{+}^{n}\rightrightarrows 
\mathbb{R}_{+}$ defined by%
\begin{equation*}
\left[ f\right] \left( x\right) :=\left[ 0,f\left( x\right) \right] ,
\end{equation*}%
with the convention $\left[ 0,+\infty \right] :=\mathbb{R}_{+}.$ Then, it is
easy to see that $f$ is increasing if, and only if, $\left[ f\right] $ is
increasing, and $f$ is co-radiant if, and only if, $\left[ f\right] $ is
co-radiant. Notice that $gr\left( \left[ f\right] \right) =hypo(f).$

For a function $f:\mathbb{R}_{+}^{n}\rightarrow \mathbb{R}_{+}\cup \left\{
+\infty \right\} ,$ one can easily check that the function $\widehat{f}:%
\mathbb{R}_{+}^{n}\rightarrow \mathbb{R}_{+}\cup \left\{ +\infty \right\} $
defined by $\widehat{f}\left( x\right) :=\sup_{\lambda \geq 1}\frac{f\left(
\lambda x\right) }{\lambda }$ is the smallest co-radiant majorant of $f.$
Similarly, for set-valued mappings we have the following proposition.

\begin{proposition}
For a mapping $F:\mathbb{R}_{+}^{n}\rightrightarrows \mathbb{R}_{+}^{m}$
with nonempty graph, the set-valued mapping $\widehat{F}:\mathbb{R}%
_{+}^{n}\rightrightarrows \mathbb{R}_{+}^{m}$ defined by%
\begin{equation}
\widehat{F}(x):=\displaystyle\bigcup_{\lambda \geq 1}\dfrac{F(\lambda x)}{%
\lambda }  \label{def H}
\end{equation}%
is the pointwise smallest (in the sense of inclusion) co-radiant majorant of 
$F$. If $F$ takes only normal values, then $\widehat{F}$ takes only normal
values, too.
\end{proposition}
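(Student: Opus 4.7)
The plan is to verify four claims in turn: (a) $\widehat{F}$ majorizes $F$, (b) $\widehat{F}$ is co-radiant, (c) every co-radiant majorant $G$ of $F$ satisfies $\widehat{F}(x)\subset G(x)$ for all $x\in\mathbb{R}_{+}^{n}$, and (d) $\widehat{F}$ inherits normality of values from $F$. Claim (a) is immediate by taking $\lambda=1$ in the union defining $\widehat{F}(x)$.

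For (b), I would work with the equivalent co-radiance condition (\ref{strongcorad}) and show $\widehat{F}(\mu x)\subset \mu\,\widehat{F}(x)$ for every $\mu\geq 1$. The natural manoeuvre is the reindexing $\lambda':=\mu\lambda$, which rewrites
\begin{equation*}
\widehat{F}(\mu x) \;=\; \bigcup_{\lambda\geq 1}\frac{F(\mu\lambda x)}{\lambda} \;=\; \mu\bigcup_{\lambda'\geq \mu}\frac{F(\lambda' x)}{\lambda'};
\end{equation*}
since $\mu\geq 1$ the index set of the right-hand union is contained in $[1,\infty)$, so the whole expression is contained in $\mu\,\widehat{F}(x)$, as required.

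Claim (c) is a chase through the definitions. Given such a $G$ and $y\in\widehat{F}(x)$, write $y=z/\lambda$ with $\lambda\geq 1$ and $z\in F(\lambda x)$. The majorance assumption yields $z\in G(\lambda x)$, and co-radiance of $G$ in the form (\ref{strongcorad}) gives $G(\lambda x)\subset \lambda G(x)$; dividing by $\lambda$ then gives $y\in G(x)$.

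For (d), the observation is twofold. First, normality is preserved by multiplication by a positive scalar: if $C\subset\mathbb{R}_{+}^{m}$ is normal and $\lambda>0$, then $0\leq y\leq \lambda x$ with $x\in C$ forces $0\leq y/\lambda\leq x$, hence $y/\lambda\in C$ and $y\in \lambda C$. Second, arbitrary unions of normal sets are normal by Proposition \ref{propiedades}(iii). Combining these, each $\widehat{F}(x)$ is a union of normal sets, hence normal. No step here is really difficult; the only points requiring care are the bookkeeping of the index set in the reindexing for (b) and invoking the correct parts of Proposition \ref{propiedades} in (d).
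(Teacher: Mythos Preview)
Your proof is correct and follows essentially the same route as the paper: the same $\lambda=1$ argument for majorization, the same reindexing $\lambda'=\mu\lambda$ for co-radiance, the same two-step chain (majorance, then co-radiance of $G$) for minimality, and the same ``scalar multiple of normal is normal, union of normals is normal'' observation for the last part. The only cosmetic difference is that in (c) you argue elementwise while the paper writes the inclusion chain at the level of sets.
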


\begin{proof}
Let $x\in \mathbb{R}_{+}^{n}.$ Taking $\lambda :=1$ in (\ref{def H}), we see
that $F\left( x\right) \subset \widehat{F}\left( x\right) ;$ thus $\widehat{F%
}$ is a majorant of $F.$ The mapping $\widehat{F}$ is also co-radiant, since
for $\beta \geq 1$ we have%
\begin{equation*}
\widehat{F}\left( \beta x\right) =\displaystyle\bigcup_{\lambda \geq 1}%
\dfrac{F(\lambda \beta x)}{\lambda }=\beta \displaystyle\bigcup_{\lambda
\geq 1}\dfrac{F(\lambda \beta x)}{\lambda \beta }=\beta \displaystyle%
\bigcup_{\mu \geq \beta }\dfrac{F(\mu x)}{\mu }\subset \beta \displaystyle%
\bigcup_{\mu \geq 1}\dfrac{F(\mu x)}{\mu }=\beta \widehat{F}\left( x\right) .
\end{equation*}%
If $G$ is a co-radiant majorant of $F$, then for every $\lambda \geq 1$ we
have%
\begin{equation*}
\widehat{F}(x)=\displaystyle\bigcup_{\lambda \geq 1}\dfrac{F(\lambda x)}{%
\lambda }\subset \displaystyle\bigcup_{\lambda \geq 1}\dfrac{G(\lambda x)}{%
\lambda }\subset \displaystyle\bigcup_{\lambda \geq 1}\dfrac{\lambda G(x)}{%
\lambda }=G(x);
\end{equation*}%
this proves that $\widehat{F}$ is the smallest co-radiant majorant of $F.$
Finally, if $F$ takes only normal values then, since each set $\dfrac{%
F(\lambda x)}{\lambda }$ $\left( \lambda \geq 1\right) $ is normal, their
union $\widehat{F}(x)$ is normal, too; so $\widehat{F}$ takes only normal
values.
\end{proof}

\bigskip

For $f:\mathbb{R}_{+}^{n}\rightarrow \mathbb{R}_{+}\cup \left\{ +\infty
\right\} ,$ a straightforward computation shows that, for every $x\in 
\mathbb{R}_{+}^{n},$ one has%
\begin{equation*}
\widehat{\left[ f\right] }\left( x\right) =\left\{ 
\begin{array}{c}
\left[ \widehat{f}\right] \left( x\right) \text{\qquad if either }%
\sup_{\lambda \geq 1}\frac{f\left( \lambda x\right) }{\lambda }\text{ is
attained or is }+\infty , \\ 
\left[ 0,\widehat{f}\left( x\right) \right) \text{\qquad otherwise.\qquad
\qquad \qquad \qquad \qquad \qquad \qquad\ }%
\end{array}%
\right.
\end{equation*}%
Hence%
\begin{equation*}
\left[ \widehat{f}\right] \left( x\right) =cl\widehat{\left[ f\right] }%
\left( x\right) .
\end{equation*}

The following proposition generalizes the above mentioned equivalence of
co-radiantness of a function $f:\mathbb{R}_{+}^{n}\rightarrow \mathbb{R}%
_{+}\cup \left\{ +\infty \right\} $ and radiantness of its hypograph.

\begin{proposition}
\label{caracterizacionRadiante}Let $F:\mathbb{R}_{+}^{n}\rightrightarrows 
\mathbb{R}_{+}^{m}$. Then $F$ is co-radiant if, and only if, the set $gr(F)$
is radiant$.$
\end{proposition}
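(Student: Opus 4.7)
The plan is to prove both directions by directly unpacking the definitions, noting that the relation $ty\in F(tx)$ is the common content on both sides.

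First, I would fix the basic translation: by definition of $gr(F)$, we have $(u,v)\in gr(F)$ if and only if $v\in F(u)$. Therefore, $gr(F)$ is radiant precisely when, for every $(x,y)\in gr(F)$ and every $t\in(0,1]$, we have $(tx,ty)\in gr(F)$, i.e.\ $ty\in F(tx)$. Rephrased, radiance of $gr(F)$ says: for every $x\in\mathbb{R}_{+}^{n}$, every $y\in F(x)$ and every $t\in(0,1]$, the point $ty$ lies in $F(tx)$.

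For the forward implication, assume $F$ is co-radiant and take $(x,y)\in gr(F)$ and $t\in(0,1]$. Then $y\in F(x)$, so $ty\in tF(x)\subset F(tx)$ by co-radiantness, giving $(tx,ty)\in gr(F)$; hence $gr(F)$ is radiant. For the converse, assume $gr(F)$ is radiant and fix $x\in\mathbb{R}_{+}^{n}$ and $t\in(0,1]$. If $F(x)=\emptyset$, then $tF(x)=\emptyset\subset F(tx)$ trivially. Otherwise, for any $y\in F(x)$ we have $(x,y)\in gr(F)$, so $(tx,ty)\in gr(F)$ by radiance, whence $ty\in F(tx)$. Varying $y$ yields $tF(x)\subset F(tx)$, so $F$ is co-radiant.

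There is no real obstacle here; the entire argument is a symmetric unpacking of the two definitions. The only minor point worth flagging is the empty-value case in the backward direction, which must be treated separately (or noted to be trivial) because the radiance condition on $gr(F)$ is vacuous over $x$ with $F(x)=\emptyset$, while the co-radiance inclusion $tF(x)\subset F(tx)$ must still hold, albeit trivially.
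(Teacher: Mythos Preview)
Your proof is correct and follows essentially the same approach as the paper's own argument: both directions proceed by directly translating between $y\in F(x)$ and $(x,y)\in gr(F)$ and then applying the respective definitions. Your explicit handling of the case $F(x)=\emptyset$ in the backward direction is a small extra care that the paper leaves implicit.
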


\begin{proof}
Assume first that $F$ is co-radiant. Let $(x,y)\in gr(F)$ and $t\in (0,1],$
then $ty\in tF(x)\subset F(tx)$, therefore $t(x,y)=\left( tx,ty\right) \in
gr(F)$. Conversely, assume that $gr(F)$ is radiant and let $y\in F(x)$; then 
$(x,y)\in gr(F)$ and, for $t\in (0,1],$ we have $(tx,ty)=t(x,y)\in gr(F)$
and thus $ty\in F(tx)$. This proves that $tF(x)\subset F(tx).$
\end{proof}

\bigskip

We are going to consider increasing co-radiant (briefly, ICR) set-valued
mappings $F:\mathbb{R}_{+}^{n}\rightrightarrows \mathbb{R}_{+}^{m}.$ We
first present a simple result on the domains of such functions.

\begin{proposition}
Let $F:\mathbb{R}_{+}^{n}\rightrightarrows \mathbb{R}_{+}^{m}$ be ICR. If $%
\mathbb{R}_{++}^{n}\cap dom(F)\neq \emptyset ,$ then $\mathbb{R}%
_{++}^{n}\subset dom(F).$
\end{proposition}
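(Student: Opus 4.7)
The plan is to pick a witness $x_0 \in \mathbb{R}_{++}^n \cap \operatorname{dom}(F)$ and, for an arbitrary $x \in \mathbb{R}_{++}^n$, produce a nonempty element of $F(x)$ by first shrinking $x_0$ (via co-radiantness) to a point below $x$, then enlarging (via the increasing property). Nonemptiness of $F(x_0)$ is transported to $F(x)$ along this two-step path.

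Concretely, I would fix $x_0 \in \mathbb{R}_{++}^n$ with $F(x_0) \neq \emptyset$ and, given an arbitrary $x \in \mathbb{R}_{++}^n$, set
\begin{equation*}
t := \min\left\{\,1,\ \min_{i=1,\dots,n} \frac{x_i}{(x_0)_i}\,\right\}.
\end{equation*}
Since every coordinate of $x_0$ is strictly positive, this minimum is well defined and strictly positive, so $t \in (0,1]$. By construction $t\,(x_0)_i \leq x_i$ for every $i$, hence $tx_0 \leq x$ in the componentwise order used throughout the paper.

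Next, co-radiantness of $F$ (Definition 3.2) gives $F(tx_0) \supset tF(x_0)$. Because $t>0$ and $F(x_0) \neq \emptyset$, the set $tF(x_0)$ is nonempty, so $F(tx_0) \neq \emptyset$. Applying the increasing property of $F$ to the inequality $tx_0 \leq x$ yields $F(tx_0) \subset F(x)$, hence $F(x) \neq \emptyset$, i.e., $x \in \operatorname{dom}(F)$.

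I do not see a genuine obstacle here: the whole argument relies on the strict positivity of the coordinates of both $x_0$ and $x$ to make the scaling factor $t$ strictly positive, which is exactly why the conclusion is stated for $\mathbb{R}_{++}^n$ and not for $\mathbb{R}_+^n$. The only small care point is checking $t \in (0,1]$ so that co-radiantness (rather than its equivalent formulation \eqref{strongcorad} for $\lambda \geq 1$) applies directly; the $\min$ with $1$ in the definition of $t$ handles this.
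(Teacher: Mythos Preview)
Your proof is correct and follows essentially the same idea as the paper's: scale one of the two strictly positive points so that they become comparable, then chain co-radiantness and increasingness to transport nonemptiness. The only cosmetic difference is the direction of scaling: you shrink the witness $x_0$ by $t\in(0,1]$ so that $tx_0\leq x$ and use the definition of co-radiantness directly, whereas the paper enlarges the target by $t\geq 1$ so that $x\leq ty$ and uses the equivalent formulation~\eqref{strongcorad}.
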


\begin{proof}
For $x,y\in \mathbb{R}_{++}^{n},$ setting $t:=\max \left\{ \max_{i}\frac{%
x_{i}}{y_{i}},1\right\} ,$ we have $x\leq ty;$ hence $F\left( x\right)
\subset F\left( ty\right) \subset tF\left( y\right) .$ Therefore, if $x\in
dom(F),$ then $y\in dom(F),$ too.
\end{proof}

\bigskip

We are going to use the following notion of Lipschitz set-valued mapping.

\begin{definition}
(see \cite{Aubin}). Let $F:\mathbb{R}_{+}^{n}\rightrightarrows \mathbb{R}%
_{+}^{m}$ and $K\subset dom(F)$. One says that $F$ is Lipschitz on $K$ if
there exists $M>0$ such that%
\begin{equation}
F(x)\subset F(y)+M\Vert x-y\Vert _{\infty }\mathbf{B}_{\infty }\hspace{1cm}%
\forall x,y\in K.  \label{Lipschitz}
\end{equation}
\end{definition}

Notice that, in the preceding definition, one can replace $\Vert \cdot \Vert
_{\infty }$ and $\mathbf{B}_{\infty }$ with any other norm and its
corresponding unit ball, respectively, since all norms in $\mathbb{R}^{n}$
are equivalent.

It is easy to check that a function $f:\mathbb{R}_{+}^{n}\rightarrow \mathbb{%
R}_{+}\cup \left\{ +\infty \right\} $ is Lipschitz on a set $K\subset 
\mathbb{R}_{+}^{n}$ if and only if its set-valued version $\left[ f\right] $
is Lipschitz on $K$.

The following proposition generalizes \cite[Remark 4.4]{Sharikov}.

\begin{proposition}
Let $F:\mathbb{R}_{+}^{n}\rightrightarrows \mathbb{R}_{+}^{m}$ be an ICR
mapping which takes only normal values. If $F\left( \overline{x}\right) $ is
bounded for some $\overline{x}:=(\overline{x}_{1},\cdots ,\overline{x}%
_{n})\in \mathbb{R}_{++}^{n},$ then $F$ is Lipschitz on every compact set $%
K\subset \mathbb{R}_{++}^{n}$.
\end{proposition}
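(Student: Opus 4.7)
The strategy is to combine compactness of $K$ with co-radiantness to first establish a uniform bound on $F(y)$ for $y\in K$, and then to control the ``stretching'' of $F$ that occurs when one moves between two points of $K$ via a scaling factor close to $1$.

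First I would exploit compactness of $K\subset \mathbb{R}_{++}^{n}$ to pick a constant $\alpha >0$ and a point $\beta \in \mathbb{R}_{+}^{n}$ such that $\alpha \leq y_{i}\leq \beta _{i}$ for every $y\in K$ and every coordinate $i$. Since $\overline{x}\in \mathbb{R}_{++}^{n}$, I can then choose $\lambda \geq 1$ large enough that $\beta \leq \lambda \overline{x}$. Combining increasingness with co-radiantness in the form $F(\lambda \overline{x})\subset \lambda F(\overline{x})$ yields
\[
F(y)\subset F(\beta )\subset F(\lambda \overline{x})\subset \lambda F(\overline{x})
\]
for every $y\in K$, and the hypothesis that $F(\overline{x})$ is bounded then provides a constant $B<+\infty $ with $\Vert w\Vert _{\infty }\leq B$ whenever $w\in F(y)$ for some $y\in K$.

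Next I would estimate the Lipschitz constant. Given $x,y\in K$, set $t:=\max_{i}x_{i}/y_{i}$, so that $x\leq ty$. If $t\leq 1$, then $F(x)\subset F(y)$ by increasingness and the desired inclusion is immediate. If $t>1$, increasingness and co-radiantness give $F(x)\subset F(ty)\subset tF(y)$, so any $v\in F(x)$ can be written $v=tw$ with $w\in F(y)$, whence $\Vert v-w\Vert _{\infty }=(t-1)\Vert w\Vert _{\infty }\leq (t-1)B$. Since the coordinates of $y$ are bounded below by $\alpha $,
\[
t-1=\max_{i}\frac{x_{i}-y_{i}}{y_{i}}\leq \frac{\Vert x-y\Vert _{\infty }}{\alpha },
\]
so $v\in F(y)+M\Vert x-y\Vert _{\infty }\mathbf{B}_{\infty }$ with $M:=B/\alpha $. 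Since the roles of $x$ and $y$ are interchangeable in the quantification over pairs in $K$, this establishes the Lipschitz property with constant $M$.

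The main obstacle is the first step: transferring boundedness of $F(\overline{x})$ to a uniform bound on $K$. The trick is to sandwich every $y\in K$ below $\beta $ and $\beta $ below $\lambda \overline{x}$, so that monotonicity pushes $F(y)$ up to $F(\beta )$ and co-radiantness then pulls it back into the scalar multiple $\lambda F(\overline{x})$. This step crucially uses that $\overline{x}\in \mathbb{R}_{++}^{n}$, since otherwise no admissible $\lambda $ need exist. Normality of the values of $F$ does not seem to play a role in the argument; it is presumably a standing convenience inherited from the surrounding framework.
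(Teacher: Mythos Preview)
Your proof is correct and is actually more direct than the paper's. The paper proceeds by first treating the special case where $F$ is positively homogeneous on $\mathbb{R}_{++}^{n}$, and then reduces the general ICR case to this one via the homogenization $\widetilde{F}(x,\lambda):=\lambda F(x/\lambda)$ on $\mathbb{R}_{+}^{n+1}$, applying the homogeneous result to $\widetilde{F}$ on the compact set $K\times\{1\}$. You avoid this detour entirely by splitting on whether $t:=\max_i x_i/y_i$ is at most $1$ (where increasingness alone gives $F(x)\subset F(y)$) or exceeds $1$ (where co-radiantness in the form $F(ty)\subset tF(y)$ applies), and by first extracting a uniform bound $B$ on $F$ over $K$ from the single bounded value $F(\overline{x})$. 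The paper's argument instead bounds $F(y)$ pointwise via $F(y)\subset(\max_j y_j/\overline{x}_j)F(\overline{x})$ inside the Lipschitz chain. Your final observation is also sharp: your argument genuinely does not use normality of the values, whereas the paper's does, in the step $(\max_i x_i/y_i)\,F(y)\subset\bigl(1+\tfrac{\|x-y\|_\infty}{\min_j y_j}\bigr)F(y)$, which requires that $sF(y)\subset s'F(y)$ whenever $0\le s\le s'$.
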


\begin{proof}
We will first consider the case when $F$ is positively homogeneous on $%
\mathbb{R}_{++}^{n}$. Let $x\in \mathbb{R}_{++}^{n}$ and $y=(y_{1},\cdots
,y_{n})\in K\mathbb{.}$ By the boundedness of $F(\overline{x})$, there
exists $L>0$ tal que $F(\overline{x})\subset L\mathbf{B}_{\infty }$. Since$%
\,x\leq \left( \max_{i}\frac{x_{i}}{y_{i}}\right) y$, $y\leq \left( \max_{i}%
\frac{y_{i}}{\overline{x}_{i}}\right) \overline{x}$ and, for every $%
i=1,...,n,$%
\begin{equation*}
\displaystyle\frac{x_{i}}{y_{i}}\leq 1+\dfrac{\max_{i}|x_{i}-y_{i}|}{y_{i}}%
\leq 1+\dfrac{\Vert x-y\Vert _{\infty }}{\min_{j}y_{j}},
\end{equation*}%
we have%
\begin{eqnarray*}
F(x) &\subset &\displaystyle\left( \max_{i}\dfrac{x_{i}}{y_{i}}\right)
F(y)\subset \left( 1+\dfrac{\Vert x-y\Vert _{\infty }}{\min_{j}y_{j}}\right)
F(y)\subset F\left( y\right) +\dfrac{\Vert x-y\Vert _{\infty }}{\min_{j}y_{j}%
}F(y) \\
&\subset &F\left( y\right) +\dfrac{\Vert x-y\Vert _{\infty }}{\min_{j}y_{j}}%
(\max_{j}\frac{y_{j}}{\overline{x}_{j}})F(\overline{x})\subset F\left(
y\right) +\dfrac{\max_{j}\frac{y_{j}}{\overline{x}_{j}}}{\min_{j}y_{j}}\Vert
x-y\Vert _{\infty }L\mathbf{B}_{\infty };
\end{eqnarray*}%
thus (\ref{Lipschitz}) holds with $M:=L\max_{y\in K}\dfrac{\max_{j}\frac{%
y_{j}}{\overline{x}_{j}}}{\min_{j}y_{j}}.$

In the general case, we extend $F$ to a mapping $\widetilde{F}:\mathbb{R}%
_{+}^{n+1}\rightrightarrows \mathbb{R}_{+}^{m}$ defined by%
\begin{equation*}
\widetilde{F}(x,\lambda ):=\left\{ 
\begin{array}{cc}
\lambda F(\dfrac{x}{\lambda }), & \text{if }\lambda >0, \\ 
\emptyset , & \text{if }\lambda =0.%
\end{array}%
\right.
\end{equation*}%
The mapping $\widetilde{F}$ is positively homogeneous on $\mathbb{R}%
_{++}^{n} $, increasing, and takes only normal values. Hence, as $\widetilde{%
F}(\overline{x},1)=F\left( \overline{x}\right) $ is nonempty and bounded,
the result follows by applying the first part of the proof to $\widetilde{F}$
and the compact set $K\times \left\{ 1\right\} .$\newline
\end{proof}

\bigskip

The folllowing (semi)continuity notions are standard in set-valued analysis.

\begin{definition}
(see \cite{Aubin}). (i) One says that $F:\mathbb{R}_{+}^{n}\rightrightarrows 
\mathbb{R}_{+}^{m}$ is upper semicontinuous (u.s.c.) at $x\in dom(F)$ if for
every open set $V$ in $\mathbb{R}_{+}^{m}$ such that $F(x)\subset V$, there
exists an open neighborhood $U$ of $x$ such that $F(U)\subset V$.

(ii) $F$ is lower semicontinuous (l.s.c.) at $x\in dom(F)$ if for every open
set $W$ in $\mathbb{R}_{+}^{m}$ such that $W\cap F(x)\neq \emptyset $, there
exists an open neighborhood $V$ of $x$ such that $W\cap F(x^{\prime })\neq
\emptyset $ for every $x^{\prime }\in V.$

(iii) $F$ is continuous at $x\in dom(F)$ if it is both u.s.c. and l.s.c. at $%
x$.
\end{definition}

It is worth observing that a function $f:\mathbb{R}_{+}^{n}\rightarrow 
\mathbb{R}_{+}\cup \left\{ +\infty \right\} $ is upper (lower)
semicontinuous at a point $x\in \mathbb{R}_{+}^{n}$ if, and only if, its
set-valued counterpart $\left[ f\right] $ is u.s.c. (resp., l.s.c.) at this
point.

A useful sequential characterization of upper semicontinuity is stated in
the next proposition.

\begin{proposition}
\cite[Theorem 1]{Hildebrand} Let $F:\mathbb{R}_{+}^{n}\rightrightarrows 
\mathbb{R}_{+}^{m}$ take only compact values. Then it is u.s.c. at $x\in
dom(F)$ if, and only if, for every sequence $\{x_{n}\}$ in $\mathbb{R}%
_{+}^{n}$ converging to $x$ and every sequence $\left\{ y_{n}\right\} $ in $%
\mathbb{R}_{+}^{m}$ such that $y_{n}\in F(x_{n})$ for every $n$ there exists
a subsequence of $\left\{ y_{n}\right\} $ which converges to a point in $%
F(x).$
\end{proposition}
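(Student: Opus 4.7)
The plan is to prove the two implications separately, in each case exploiting the compactness of $F(x)$ to trap subsequences in bounded sets and to control distances.

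For the $(\Rightarrow)$ direction, assume $F$ is u.s.c. at $x$ and take sequences $x_n\to x$ with $y_n\in F(x_n)$. The first step is to show that $\{y_n\}$ eventually lies in a bounded set: since $F(x)$ is compact, the open thickening $V:=F(x)+ \mathrm{int}(\mathbf{B}_{\infty})$ (intersected with $\mathbb{R}_+^m$) is an open neighborhood of $F(x)$, so upper semicontinuity yields an open neighborhood $U$ of $x$ with $F(U)\subset V$; for $n$ large, $x_n\in U$, forcing $y_n\in V$, which is bounded. Extracting a convergent subsequence $y_{n_k}\to y$, it remains to check $y\in F(x)$. If not, then since $F(x)$ is compact we have $d(y,F(x))=2\varepsilon>0$ for some $\varepsilon>0$; applying u.s.c. to the open neighborhood $V_{\varepsilon}:=F(x)+\varepsilon\,\mathrm{int}(\mathbf{B}_{\infty})$ gives a neighborhood $U_{\varepsilon}$ of $x$ with $F(U_{\varepsilon})\subset V_{\varepsilon}$, so eventually $y_{n_k}\in V_{\varepsilon}$, contradicting $y_{n_k}\to y$ with $d(y,F(x))=2\varepsilon$.

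For the $(\Leftarrow)$ direction, I would argue by contrapositive. Suppose $F$ is not u.s.c.\ at $x$. Then there exists an open set $V\supset F(x)$ such that for every open neighborhood $U$ of $x$ one has $F(U)\not\subset V$. Specializing to $U_n:=B(x,1/n)$, pick $x_n\in U_n$ and $y_n\in F(x_n)\setminus V$. By hypothesis, a subsequence $y_{n_k}$ converges to some $y\in F(x)\subset V$; but $\mathbb{R}_+^m\setminus V$ is closed and contains every $y_{n_k}$, so it contains the limit $y$, contradicting $y\in V$.

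The main obstacle, and the only place compactness of $F(x)$ is really used, is the extraction of a convergent subsequence in the $(\Rightarrow)$ direction: without compact values one could still trap $\{y_n\}$ in an open neighborhood of $F(x)$, but that neighborhood need not be bounded. Using the compactness of $F(x)$, the thickening $F(x)+\mathrm{int}(\mathbf{B}_{\infty})$ is bounded, which is exactly what makes Bolzano--Weierstrass available. The rest is a routine open-set / $\varepsilon$-neighborhood argument.
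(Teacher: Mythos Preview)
Your argument is correct. Note, however, that the paper does not actually supply a proof of this proposition: it is stated as a citation of \cite[Theorem 1]{Hildebrand} and used as a black box (to derive Corollary \ref{char usc} and later in the continuity result for ICR mappings). So there is no ``paper's own proof'' to compare against; you have simply filled in the standard proof of the cited result, and both implications are handled cleanly. The only point worth tightening is cosmetic: in the $(\Rightarrow)$ direction you could streamline by observing directly that for every $\varepsilon>0$ the set $V_\varepsilon=(F(x)+\varepsilon\,\mathrm{int}(\mathbf{B}_\infty))\cap\mathbb{R}_+^m$ is open in $\mathbb{R}_+^m$ and bounded, so a single application of u.s.c.\ plus Bolzano--Weierstrass already gives a subsequence whose limit lies in $\overline{V_\varepsilon}$ for all $\varepsilon>0$, hence in $F(x)$ by compactness; this avoids the two-stage argument, but your version is fine as written.
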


The following corollary is immediate. For $x\in X$ and $A\subset X,$ we
define $d_{\infty }(x,A):=\inf_{a\in A}\left\Vert x-a\right\Vert _{\infty }.$

\begin{corollary}
\label{char usc}Let $F:\mathbb{R}_{+}^{n}\rightrightarrows \mathbb{R}%
_{+}^{m} $ take only compact values. Then it is u.s.c. at $x\in dom(F)$ if,
and only if, for every sequence $\{x_{n}\}$ in $\mathbb{R}_{+}^{n}$
converging to $x$ and every sequence $\left\{ y_{n}\right\} $ in $\mathbb{R}%
_{+}^{m}$ such that $y_{n}\in F(x_{n})$ for every $n,$ one has $\displaystyle%
\lim_{n\rightarrow +\infty }d_{\infty }(y_{n},F(x))=0.$
\end{corollary}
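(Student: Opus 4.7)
The plan is to deduce this corollary directly from the preceding proposition (the characterization of upper semicontinuity by convergent subsequences), by showing that for mappings with compact values, the subsequence property and the distance-to-$F(x)$ property are equivalent.

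For the forward direction, I would assume $F$ is u.s.c. at $x$ and argue by contradiction: if $d_{\infty}(y_{n},F(x))\not\to 0$ for some $\{x_{n}\}\to x$ and $\{y_{n}\}$ with $y_{n}\in F(x_{n})$, then some subsequence $\{y_{n_{k}}\}$ satisfies $d_{\infty}(y_{n_{k}},F(x))\geq \varepsilon$ for all $k$. Applying the preceding proposition to the subsequences $\{x_{n_{k}}\}$ (which still converges to $x$) and $\{y_{n_{k}}\}$, I extract a further subsequence converging to some point $y\in F(x)$, which forces the distances to go to zero, a contradiction.

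For the reverse direction, assume the distance condition and take any $\{x_{n}\}\to x$ and $y_{n}\in F(x_{n})$. Since $F(x)$ is compact and nonempty, the infimum defining $d_{\infty}(y_{n},F(x))$ is attained at some $z_{n}\in F(x)$, and by hypothesis $\|y_{n}-z_{n}\|_{\infty }\to 0$. Compactness of $F(x)$ yields a convergent subsequence $z_{n_{k}}\to z\in F(x)$, and then $y_{n_{k}}\to z$ as well. This verifies the subsequence property, so the preceding proposition gives upper semicontinuity at $x$.

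The argument is essentially routine; the only point requiring a bit of care is noting that compactness of $F(x)$ is used twice in the reverse direction (to attain the distance and to extract a convergent subsequence), and that the subsequence of $\{x_{n}\}$ in the forward direction still converges to $x$ so the hypothesis of the preceding proposition applies. No real obstacle is expected.
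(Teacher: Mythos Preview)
Your proposal is correct and is exactly the routine argument the paper has in mind: the paper gives no proof at all, simply declaring the corollary ``immediate'' from the preceding Hildebrand-type characterization, and your two directions are precisely the standard way to unpack that immediacy.
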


\bigskip

We next present generalizations of some results obtained in \cite{Martinez}.

\begin{proposition}
If $F:\mathbb{R}_{+}^{n}\rightrightarrows \mathbb{R}_{+}^{m}$ is an ICR\
mapping and takes only normal values and there exists $\overline{x}\in 
\mathbb{R}_{++}^{n}$ such that $F(\overline{x})$ is bounded, then $F(x)$ is
bounded for every $x\in \mathbb{R}_{+}^{n}$. Furthermore, if $F(\overline{x}%
)=\{0\}$ ($F(\overline{x})=\emptyset $) then $F(x)=\{0\}$ ($F(x)=\emptyset ,$
respectively) for every $x\in \mathbb{R}_{+}^{n}$.
\end{proposition}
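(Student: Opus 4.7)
The plan relies on one geometric observation: because $\overline{x}\in\mathbb{R}_{++}^{n}$ has all coordinates strictly positive, every $x\in\mathbb{R}_{+}^{n}$ is dominated by a scalar multiple of $\overline{x}$. Concretely, set
\begin{equation*}
\lambda_{x}:=\max\left\{1,\,\max_{i=1,\ldots,n}\frac{x_{i}}{\overline{x}_{i}}\right\},
\end{equation*}
so that $\lambda_{x}\geq 1$ and $x\leq \lambda_{x}\overline{x}$. First I would invoke increasingness to obtain $F(x)\subset F(\lambda_{x}\overline{x})$, and then apply the reformulation (\ref{strongcorad}) of co-radiantness to conclude $F(\lambda_{x}\overline{x})\subset \lambda_{x}F(\overline{x})$. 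The master inclusion
\begin{equation*}
F(x)\subset \lambda_{x}F(\overline{x})\qquad(x\in\mathbb{R}_{+}^{n})
\end{equation*}
is really the only ingredient needed. Boundedness of $F(x)$ is immediate from boundedness of $F(\overline{x})$, and the case $F(\overline{x})=\emptyset$ forces $F(x)\subset\emptyset$, giving $F(x)=\emptyset$. In the case $F(\overline{x})=\{0\}$, the inclusion specializes to $F(x)\subset \lambda_{x}\{0\}=\{0\}$.

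To upgrade the latter inclusion to the equality $F(x)=\{0\}$, I would use the reverse comparison. For $x\in\mathbb{R}_{++}^{n}$, the scalar $\mu_{x}:=\max\{1,\max_{i}\overline{x}_{i}/x_{i}\}$ is $\geq 1$ and satisfies $\overline{x}\leq \mu_{x}x$. Applying the same two-step chain (increasingness then co-radiantness) in this opposite direction yields $\{0\}=F(\overline{x})\subset F(\mu_{x}x)\subset \mu_{x}F(x)$; since $0\in \mu_{x}F(x)$ implies $0\in F(x)$, this together with $F(x)\subset\{0\}$ gives the desired equality. For $x\in\mathbb{R}_{+}^{n}$ with some coordinate equal to zero I would approximate $x$ from above by interior points $x+\varepsilon\overline{x}\in\mathbb{R}_{++}^{n}$ and use increasingness plus normality of the values to conclude $F(x)\subset\{0\}$ in the same way.

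The main technical difficulty I anticipate is exactly this last boundary situation in the $F(\overline{x})=\{0\}$ case: points with a zero coordinate cannot be bounded below by any positive multiple of $\overline{x}$, so the clean two-sided argument that works on $\mathbb{R}_{++}^{n}$ does not extend verbatim. The cleanest route is to reduce the claim to showing $0\in F(x)$, and then exploit normality of $F(x)$ (any nonempty normal subset of $\mathbb{R}_{+}^{m}$ contains $0$) combined with some guarantee that $F(x)$ is nonempty. Apart from this subtle boundary point, the proof is essentially a single application of the master inclusion, applied in both directions.
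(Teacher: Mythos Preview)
Your core argument---choose $\lambda\geq 1$ with $x\leq\lambda\overline{x}$ and chain increasingness with co-radiantness to obtain $F(x)\subset F(\lambda\overline{x})\subset\lambda F(\overline{x})$---is exactly the paper's proof, which consists of that single line and nothing more. Boundedness, the $\emptyset$ case, and the inclusion $F(x)\subset\{0\}$ all follow at once.

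Your hesitation about upgrading $F(x)\subset\{0\}$ to equality at boundary points is well placed and in fact uncovers an imprecision in the stated proposition. Take $n=m=1$ and set $F(x):=\{0\}$ for $x>0$, $F(0):=\emptyset$. This $F$ is increasing, co-radiant (with nonempty graph), and takes only normal values; for $\overline{x}=1$ one has $F(\overline{x})=\{0\}$, yet $F(0)=\emptyset\neq\{0\}$. Hence the conclusion ``$F(x)=\{0\}$ for every $x\in\mathbb{R}_{+}^{n}$'' cannot hold without an additional hypothesis such as $dom(F)=\mathbb{R}_{+}^{n}$. The paper's one-line proof does not address this point and delivers only $F(x)\subset\{0\}$, which is the most one can extract from the stated assumptions. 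Your observation that any nonempty normal subset of $\mathbb{R}_{+}^{m}$ contains $0$ already gives the equality on all of $dom(F)$, so the reverse-comparison step for interior points is unnecessary, and nothing further can be said at points outside $dom(F)$.
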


\begin{proof}
For every $x\in \mathbb{R}_{+}^{n}$ existe $\lambda \geq 1$ such that $x\leq
\lambda \overline{x}$, hence $F(x)\subset F(\lambda \overline{x})\subset
\lambda F(\overline{x})$.
\end{proof}

\begin{proposition}
If $F:\mathbb{R}_{+}^{n}\rightrightarrows \mathbb{R}_{+}^{m}$ is an ICR\
mapping and takes only normal values, then

\begin{itemize}
\item[(i)] $F$ is l.s.c. at every $x\in dom(F)\cap \mathbb{R}_{++}^{n}.$

\item[(ii)] If $\ F$ takes only compact values, then it is continuous at
every $x\in dom(F)\cap \mathbb{R}_{++}^{n}.$
\end{itemize}
\end{proposition}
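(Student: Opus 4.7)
To establish (i), I would exploit the interplay between co-radiantness and increasingness. Fix $x\in dom(F)\cap \mathbb{R}_{++}^{n}$ and let $W\subset \mathbb{R}_{+}^{m}$ be an open set with $W\cap F(x)\neq \emptyset$. Picking any $y\in W\cap F(x)$, the openness of $W$ lets me choose $t\in (0,1)$ close enough to $1$ that $ty\in W$ (this is trivial for $y=0$, and for $y\neq 0$ it follows from $\Vert ty-y\Vert _{\infty }=(1-t)\Vert y\Vert _{\infty }\to 0$). By co-radiantness, $ty\in tF(x)\subset F(tx)$. Since each component $x_{i}$ is strictly positive, $tx_{i}<x_{i}$ for every $i$, so every $x'$ in a sufficiently small neighborhood $V$ of $x$ in $\mathbb{R}_{+}^{n}$ satisfies $tx\leq x'$; increasingness then gives $F(tx)\subset F(x')$, so $ty\in F(x')\cap W$, proving lower semicontinuity at $x$.

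For (ii), since $F$ takes compact values, $F(x)$ is in particular bounded, so the Lipschitz proposition proved earlier applies with $\overline{x}:=x$: there exists a compact neighborhood $K\subset \mathbb{R}_{++}^{n}$ of $x$ on which $F$ is Lipschitz with some constant $M>0$. Upper semicontinuity at $x$ is then immediate from Corollary \ref{char usc}: for any sequence $x_{k}\to x$ (eventually in $K$) and any $y_{k}\in F(x_{k})$, the Lipschitz inclusion $F(x_{k})\subset F(x)+M\Vert x_{k}-x\Vert _{\infty }\mathbf{B}_{\infty }$ yields $d_{\infty }(y_{k},F(x))\leq M\Vert x_{k}-x\Vert _{\infty }\to 0$. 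Combined with part (i), this gives continuity at $x$.

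The key obstacle is really the core step of (i): producing \emph{some} point of $F(x')$ that lies in $W$. The idea is not to stay at $y$ but to pass to a slightly shrunk $ty$, trading the small loss of proximity to $y$ (absorbed by the openness of $W$) for the crucial gain that $tx<x$ coordinatewise, which is what allows $x'$ to dominate $tx$ as $x'$ approaches $x$. It is precisely the hypothesis $x\in \mathbb{R}_{++}^{n}$ that makes this shrinking possible; on the boundary of $\mathbb{R}_{+}^{n}$ some component $x_{i}=0$ would force $tx_{i}=0$ independently of $t$, breaking the argument.
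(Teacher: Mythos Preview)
Your proof of (i) is essentially identical to the paper's: both pick $y\in W\cap F(x)$, shrink to $ty$ (the paper writes $t=1-\delta$) so that $ty\in W$, use co-radiantness to land $ty$ in $F(tx)$, and then use $x\in\mathbb{R}_{++}^{n}$ to guarantee that every $x'$ near $x$ dominates $tx$, whence increasingness finishes.

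For (ii) you take a genuinely different route. The paper argues directly: given $\epsilon>0$ it picks $M>\epsilon$ with $F(x)$ contained in the ball of radius $M$, observes that eventually $x^{k}\leq\frac{M}{M-\epsilon}x$, so $y^{k}\in F(x^{k})\subset\frac{M}{M-\epsilon}F(x)$, and then uses normality to see that $\frac{M-\epsilon}{M}y^{k}\in F(x)$, giving $d_{\infty}(y^{k},F(x))\leq\frac{\epsilon}{M}\Vert y^{k}\Vert$. You instead invoke the earlier Lipschitz proposition (applicable since $F(x)$ is bounded and $x\in\mathbb{R}_{++}^{n}$) on a compact neighborhood $K\subset\mathbb{R}_{++}^{n}$ of $x$, and read off $d_{\infty}(y_{k},F(x))\leq M\Vert x_{k}-x\Vert_{\infty}$ directly from the Lipschitz inclusion. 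Your argument is shorter and cleaner, at the cost of relying on a heavier earlier result; the paper's argument is self-contained and in fact reproves, in a local form, the same scaling estimate that underlies the Lipschitz proposition. Both are correct.
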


\begin{proof}
(i) Let $W$ be an open set in $\mathbb{R}_{+}^{m}$ such that $W\cap F(x)\neq
\emptyset .$ Take $y\in W\cap F(x).$ For small enough $\delta >0$ and $%
U:=\{ty:t\in \lbrack 1-\delta ,1]\},$ we have $U\subset W.$ Set $V:=\left(
1-\delta \right) x+\mathbb{R}_{++}^{n}$, then $V$ is open and $x=\left(
1-\delta \right) x+\delta x\in \left( 1-\delta \right) x+\mathbb{R}%
_{++}^{n}=V;$ moreover, for every $x^{\prime }\in V,$ we have $\left(
1-\delta \right) y\in U\cap \left( 1-\delta \right) F(x)\subset W\cap
F((1-\delta )x)\subset W\cap F(x^{\prime }),$ which shows that $W\cap
F(x^{\prime })\neq \emptyset .$

(ii) Let $\{x^{k}\}$ be a sequence in $\mathbb{R}_{+}^{n}$ converging to $x$
and $\left\{ y^{k}\right\} $ be a sequence in $\mathbb{R}_{+}^{m}$ such that 
$y^{k}\in F(x^{k})$ for every $k.$ Fix $\epsilon >0,$ and let $M>\epsilon $
be the radius of an open ball centered at the origin which contains $F\left(
x\right) .$ Since $\{x^{k}\}$ converges to $x,$ for sufficiently large $k$
we have $x^{k}\leq \frac{M}{M-\epsilon }x;$ hence, as $F$ is increasing and
co-radiant, we deduce that $F(x^{k})\subset \frac{M}{M-\epsilon }F(x)$,
which implies that $y^{k}\in \frac{M}{M-\epsilon }F(x)$. Therefore 
\begin{equation*}
d_{\infty }(y^{k},F(x))\leq d(y^{k},\frac{M-\epsilon }{M}y^{k})=\frac{%
\epsilon }{M}\Vert y^{k}\Vert <\epsilon ,
\end{equation*}%
which proves that $\displaystyle{\lim_{k\rightarrow +\infty }}d_{\infty
}(y^{k},F(x))=0.$ Hence, by Corollary \ref{char usc}, the mapping $F$ is
u.s.c. at $x.$ Continuity follows from statement (i).
\end{proof}

\bigskip

The proofs of the following propositions are immediate.

\begin{proposition}
\label{utilparalaPropsiguiente}If $F:\mathbb{R}_{+}^{n}\rightrightarrows 
\mathbb{R}_{+}^{m}$ is co-radiant, then the mappings $\overline{F},\mathbf{B}%
_{\epsilon }(F):\mathbb{R}_{+}^{n}\rightrightarrows \mathbb{R}_{+}^{m}$
defined by%
\begin{equation*}
\overline{F}(x):=\overline{F(x)}\hspace{0.25cm}\text{and }\hspace{0.25cm}%
\mathbf{B}_{\epsilon }(F)(x):=\{y\in \mathbb{R}^{m}\,:\,d_{\infty
}(y,F(x))\leq \epsilon \}
\end{equation*}%
are co-radiant.
\end{proposition}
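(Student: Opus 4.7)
The plan is to verify the defining inequality $G(tx) \supset tG(x)$ for $t \in (0,1]$ directly in each of the two cases, using the co-radiance hypothesis $F(tx) \supset tF(x)$ together with elementary properties of closure and distance.

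For $\overline{F}$, I would take $t \in (0,1]$ and an arbitrary element $y \in t\overline{F}(x)$, write it as $y = tz$ with $z \in \overline{F(x)}$, and pick a sequence $\{z_k\} \subset F(x)$ with $z_k \to z$. The co-radiance of $F$ then gives $tz_k \in tF(x) \subset F(tx)$ for every $k$, so $y = tz = \lim_k tz_k$ lies in $\overline{F(tx)} = \overline{F}(tx)$. That shows $t\overline{F}(x) \subset \overline{F}(tx)$.

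For $\mathbf{B}_{\epsilon}(F)$, I would again take $t \in (0,1]$ and $y \in t\mathbf{B}_{\epsilon}(F)(x)$, write $y = tz$ with $d_{\infty}(z,F(x)) \leq \epsilon$, and estimate the distance to $F(tx)$ using the inclusion $tF(x) \subset F(tx)$:
\begin{equation*}
d_{\infty}(y, F(tx)) \leq \inf_{w \in F(x)} \|tz - tw\|_{\infty} = t \cdot d_{\infty}(z, F(x)) \leq t\epsilon \leq \epsilon,
\end{equation*}
where the final inequality crucially uses $t \leq 1$. This yields $y \in \mathbf{B}_{\epsilon}(F)(tx)$ and completes the verification.

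There is no real obstacle — the statement is essentially a direct compatibility check. The only point that requires any care is making sure the factor $t$ from scaling absorbs correctly: for the closure, continuity of scalar multiplication does the work, and for the $\epsilon$-enlargement, the bound $t \leq 1$ is exactly what keeps the enlarged radius from growing. No further machinery (such as normality of values or Proposition \ref{properties sigma}) is needed.
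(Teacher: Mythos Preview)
Your proof is correct and is precisely the direct verification the paper has in mind; the paper itself omits any argument, declaring the result ``immediate,'' and your write-up simply fills in those details in the natural way.
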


As is well known, the class of co-radiant functions is closed both under
pointwise infimum and pointwise supremum. The following proposition provides
set-valued generalizations of these facts.

\begin{proposition}
\label{utilparalaPropsiguiente1}If $\{F^{i}\}_{i\in \mathcal{I}}$ is a
family of co-radiant mappings, then the mappings $\bigcap_{i\in \mathcal{I}%
}F^{i}$ and $\bigcup_{i\in \mathcal{I}}F^{i}$ defined by 
\begin{equation*}
\displaystyle\left( \bigcap_{i\in \mathcal{I}}F^{i}\right) (x):=\displaystyle%
\bigcap_{i\in \mathcal{I}}F^{i}(x)\hspace{0.25cm}\text{and}\hspace{0.25cm}%
\displaystyle\left( \bigcup_{i\in \mathcal{I}}F^{i}\right) (x):=\displaystyle%
\bigcup_{i\in \mathcal{I}}F^{i}(x)
\end{equation*}%
are co-radiant.
\end{proposition}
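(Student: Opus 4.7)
The plan is to verify the defining co-radiant inclusion $F(tx) \supset tF(x)$ (for $x \in \mathbb{R}_+^n$ and $t \in (0,1]$) directly for both the intersection and the union mapping, exploiting the fact that multiplication by a scalar $t$ distributes over unions and passes inside intersections.

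For the intersection, I would fix $x \in \mathbb{R}_+^n$ and $t \in (0,1]$, and start from an arbitrary $y \in t \bigl( \bigcap_{i \in \mathcal{I}} F^{i} \bigr)(x) = t \bigcap_{i \in \mathcal{I}} F^{i}(x)$. Writing $y = tz$ with $z \in F^{i}(x)$ for every $i \in \mathcal{I}$, the co-radiantness of each $F^{i}$ gives $y = tz \in t F^{i}(x) \subset F^{i}(tx)$ for every $i$. Intersecting over $i$ yields $y \in \bigl( \bigcap_{i \in \mathcal{I}} F^{i} \bigr)(tx)$, as required.

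For the union the argument is even shorter: given $y \in t \bigl( \bigcup_{i \in \mathcal{I}} F^{i} \bigr)(x)$, there exist $i_{0} \in \mathcal{I}$ and $z \in F^{i_{0}}(x)$ with $y = tz$, and co-radiantness of $F^{i_{0}}$ yields
\begin{equation*}
y \in t F^{i_{0}}(x) \subset F^{i_{0}}(tx) \subset \bigl( \bigcup_{i \in \mathcal{I}} F^{i} \bigr)(tx).
\end{equation*}

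There is no substantive obstacle; the only minor point is keeping the quantifier order straight in the intersection case, where the inclusion $tF^{i}(x) \subset F^{i}(tx)$ must be applied for each fixed $i$ before intersecting. An alternative one-line proof goes via Proposition \ref{caracterizacionRadiante}: since $gr\bigl(\bigcap_{i} F^{i}\bigr) = \bigcap_{i} gr(F^{i})$ and $gr\bigl(\bigcup_{i} F^{i}\bigr) = \bigcup_{i} gr(F^{i})$, and arbitrary intersections and unions of radiant subsets of $\mathbb{R}_+^n \times \mathbb{R}_+^m$ are trivially radiant from the definition, both mappings are co-radiant.
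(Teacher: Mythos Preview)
Your proof is correct and is precisely the immediate verification the paper has in mind; indeed, the paper gives no proof at all, simply declaring before Propositions~\ref{utilparalaPropsiguiente} and~\ref{utilparalaPropsiguiente1} that ``the proofs of the following propositions are immediate.'' Your alternative via Proposition~\ref{caracterizacionRadiante} is equally valid; the only technical caveat (which the paper itself ignores) is that the definition of co-radiantness requires a nonempty graph, so strictly speaking the intersection case should carry this as a standing assumption.
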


\bigskip

We are going to deal with pointwise Painlev\'{e}-Kuratowski limits of
sequences of set-valued mappings. We first recall the convergence notions in
the Painlev\'{e}-Kuratowski sense for sequences of subsets.

\begin{definition}
Let $\{C_{k}\}$ be a sequence of subsets of $\mathbb{R}^{n}$. The sets%
\begin{equation*}
{\mathrm{Limsup}}_{k}C_{k}:=\left\{ x\in \mathbb{R}^{n}\text{ }:\lim
\inf_{k\rightarrow +\infty }d_{\infty }(x,C_{k})=0\right\}
\end{equation*}%
and%
\begin{equation*}
{\mathrm{Liminf}}_{k}C_{k}:=\left\{ x\in \mathbb{R}^{n}\text{ }%
:\lim_{k\rightarrow +\infty }d_{\infty }(x,C_{k})=0\right\}
\end{equation*}%
are called the $upper$ $limit$ and the $lower$ $limit$ of $\{C_{k}\}$,
respectively$.$
\end{definition}

\bigskip

It is easy to see that these sets do not change if one replaces the sets $%
C_{k}$ with their closures and that ${\mathrm{Liminf}}_{k}C_{k}\subset $ ${%
\mathrm{Limsup}}_{k}C_{k}.$

The following representations of ${\mathrm{Liminf}}$ and ${\mathrm{Limsup}}$
are very useful.

\begin{proposition}
\label{Important}\cite[p. 21]{Aubin}\cite[Exercise 4.2(b)]{Rockafellar-Wets}
For a sequence $\{C_{k}\}$ of subsets of $\mathbb{R}^{n},$ one has:
\end{proposition}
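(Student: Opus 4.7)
The plan is to establish each of the two expected sequential characterisations separately, translating the distance conditions appearing in the definitions of $\mathrm{Liminf}$ and $\mathrm{Limsup}$ into the existence of (sub)sequences approaching $x$. I will treat the $\mathrm{Liminf}$ case first, since the $\mathrm{Limsup}$ case then follows by applying the $\mathrm{Liminf}$ argument along a subsequence.

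For the $\mathrm{Liminf}$ characterisation, I would first show that if $x_{k}\in C_{k}$ for all sufficiently large $k$ and $x_{k}\to x$, then $d_{\infty }(x,C_{k})\leq \Vert x-x_{k}\Vert _{\infty }$ eventually, so $d_{\infty }(x,C_{k})\to 0$ and $x\in \mathrm{Liminf}_{k}C_{k}$. For the converse, assume $\lim_{k\to \infty }d_{\infty }(x,C_{k})=0$. Then $d_{\infty }(x,C_{k})<+\infty $ for all large $k$, so, using the convention that $\inf \emptyset =+\infty $, we have $C_{k}\neq \emptyset $ for all large $k$. For each such $k$ one may choose $x_{k}\in C_{k}$ with $\Vert x-x_{k}\Vert _{\infty }\leq d_{\infty }(x,C_{k})+1/k$, and the resulting sequence satisfies $x_{k}\to x$. (For the finitely many indices with $C_{k}=\emptyset $ one simply leaves $x_{k}$ undefined, which is allowed by the "eventually" clause in the characterisation.)

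For the $\mathrm{Limsup}$ characterisation I would use the standard equivalence
\begin{equation*}
\liminf_{k\to \infty }d_{\infty }(x,C_{k})=0\;\Longleftrightarrow \;\exists \,k_{j}\uparrow +\infty \text{ with }\lim_{j\to \infty }d_{\infty }(x,C_{k_{j}})=0.
\end{equation*}
Given such a subsequence, I apply the $\mathrm{Liminf}$ argument above to the sequence $\{C_{k_{j}}\}$ to produce $x_{j}\in C_{k_{j}}$ with $x_{j}\to x$. Conversely, if $x_{j}\in C_{k_{j}}$ with $x_{j}\to x$, then $d_{\infty }(x,C_{k_{j}})\to 0$, so $\liminf_{k}d_{\infty }(x,C_{k})=0$ and $x\in \mathrm{Limsup}_{k}C_{k}$.

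There is no deep obstacle here; the work is essentially a careful translation between the $\varepsilon $-description of $\liminf /\lim $ of real sequences and the existence of approximating points. The only place where care is required is the handling of empty $C_{k}$ and of the case where the infimum defining $d_{\infty }(x,C_{k})$ is not attained, both of which are dealt with by the $1/k$-slack in the choice of $x_{k}$ and by the observation that $d_{\infty }(x,\emptyset )=+\infty $ automatically excludes the empty-$C_{k}$ indices from the relevant limit conditions.
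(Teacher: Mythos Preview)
Your proposal proves the \emph{sequential} characterisations of $\mathrm{Liminf}$ and $\mathrm{Limsup}$ (existence of approximating sequences, respectively subsequences), but that is not what Proposition~\ref{Important} asserts. In the paper the content of the proposition is carried by the two items that follow it:
\[
\mathrm{Limsup}_{k}C_{k}=\bigcap_{l\in\mathbb{N}}\overline{\bigcup_{k\geq l}C_{k}},\qquad
\mathrm{Liminf}_{k}C_{k}=\bigcap_{I\in\aleph}\overline{\bigcup_{i\in I}C_{i}},
\]
where $\aleph$ is the collection of infinite subsets of $\mathbb{N}$. These are the identities that are actually used later (e.g.\ to show that $\mathrm{Liminf}$ and $\mathrm{Limsup}$ preserve normality and co-radiantness via stability under closures, unions and intersections). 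Your argument never touches these set-theoretic descriptions; it stops at the equivalence between the distance conditions and the existence of (sub)sequences $x_{k}\in C_{k}$ converging to $x$, which is a different (though related) fact.

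To close the gap you would still need to argue, for instance, that $x\in\bigcap_{l}\overline{\bigcup_{k\geq l}C_{k}}$ is equivalent to ``for every $\varepsilon>0$ there are infinitely many $k$ with $d_{\infty}(x,C_{k})<\varepsilon$'', i.e.\ $\liminf_{k}d_{\infty}(x,C_{k})=0$; and that $x\in\bigcap_{I\in\aleph}\overline{\bigcup_{i\in I}C_{i}}$ is equivalent to ``no infinite set of indices stays $\varepsilon$-away from $x$'', i.e.\ $\lim_{k}d_{\infty}(x,C_{k})=0$. Neither step is hard, but they are the substance of the proposition and are absent from your write-up. Note also that the paper itself does not supply a proof: it simply records the result with references to Aubin--Frankowska and Rockafellar--Wets, so there is no in-paper argument to compare against beyond the statement itself.
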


\begin{itemize}
\item[(i)] ${\mathrm{Limsup}}_{k}C_{k}=\mathbb{\dbigcap }_{l\in \mathbb{N}}%
\overline{\mathbb{\dbigcup }_{k\geq l}C_{k}}.$

\item[(ii)] ${\mathrm{Liminf}}_{k}C_{k}=\mathbb{\dbigcap }_{I\in \aleph }%
\overline{\mathbb{\dbigcup }_{i\in I}C_{i}}.$
\end{itemize}

\noindent Here $\aleph $ denotes the set of infinite subsets of $\mathbb{N}.$

\bigskip

From Proposition \ref{Important}, it immediately follows that the sets ${%
\mathrm{Liminf}}_{k}C_{k}$ and ${\mathrm{Limsup}}_{k}C_{k}$ are closed.

\begin{definition}
\cite[Chapter 5]{Rockafellar-Wets} Let $\{F^{k}\}$ be a sequence of
set-valued mappings from ${\mathbb{R}}_{+}^{n}$ into $\mathbb{R}_{+}^{m}$.

\begin{itemize}
\item[(i)] $\mathbf{The}$ $\mathbf{pointwise\ lower\ limit}$ of $\{F^{k}\}$
is the mapping ${\mathrm{Liminf}}_{k}F^{k}:\mathbb{R}_{+}^{n}%
\rightrightarrows \mathbb{R}_{+}^{m}$ defined by 
\begin{equation*}
({\mathrm{Liminf}}_{k}F^{k})(x):={\mathrm{Liminf}}_{k}F^{k}(x)
\end{equation*}

\item[(ii)] $\mathbf{The}$ $\mathbf{pointwise}$ $\mathbf{upper}$ $\mathbf{%
limit}$ of $\{F^{k}\}$ is the mapping ${\mathrm{Limsup}}_{k}F^{k}:\mathbb{R}%
_{+}^{n}\rightrightarrows \mathbb{R}_{+}^{m}$ defined by%
\begin{equation*}
({\mathrm{Limsup}}_{k}F^{k})(x):={\mathrm{Limsup}}_{k}F^{k}(x)
\end{equation*}
\end{itemize}
\end{definition}

The following proposition collects some basic properties of pointwise lower
and upper limits of set-valued mappings.

\begin{proposition}
For a sequence $\{F^{k}\}$ of set-valued mappings from $\mathbb{R}_{+}^{n}$
into $\mathbb{R}_{+}^{m}$, one has:

\begin{itemize}
\item[(i)] ${\mathrm{Liminf}}_{k}F^{k}$\thinspace\ and\thinspace\ ${\mathrm{%
Limsup}}_{k}F^{k}$ only take closed values.

\item[(ii)] If each $F^{k}$ only takes normal values, then ${\mathrm{Liminf}}%
_{k}F^{k}$\thinspace\ and\thinspace\ ${\mathrm{Limsup}}_{k}F^{k}$ only take
normal values, too.

\item[(iii)] If each $F^{k}$ is increasing, then ${\mathrm{Liminf}}_{k}F^{k}$%
\thinspace\ and\thinspace\ ${\mathrm{Limsup}}_{k}F^{k}$ are increasing, too.

\item[(iv)] If each $F^{k}$ is co-radiant, then ${\mathrm{Liminf}}_{k}F^{k}$%
\thinspace\ and\thinspace\ ${\mathrm{Limsup}}_{k}F^{k}$ are co-radiant, too.
\end{itemize}
\end{proposition}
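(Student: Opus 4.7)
The plan is to handle each of the four items by unpacking the pointwise definitions and using the distance-based characterization of $\Li$ and $\Ls$ together with the representation from Proposition \ref{Important}; no single step looks hard, but (ii) is the one that most crucially uses structure beyond generic closed-set properties, so I expect that to be the main point where some care is needed.

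I would first dispose of (i) and (ii) together via Proposition \ref{Important}: at every $x$, both $\Li_k F^k(x)$ and $\Ls_k F^k(x)$ are expressed there as intersections of closures of unions of the sets $F^k(x)$. This immediately gives (i) (intersection of closed sets). For (ii), assuming each $F^k(x)$ is normal, Proposition \ref{propiedades}(iii) guarantees that the unions and intersections stay normal, and (iv) of the same proposition handles the closure step, so the representation delivers normality of $\Li_k F^k(x)$ and $\Ls_k F^k(x)$.

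For (iii), take $x \le x'$; increasingness of each $F^k$ gives $F^k(x) \subset F^k(x')$, hence $d_{\infty}(y, F^k(x')) \le d_{\infty}(y, F^k(x))$ for every $y$. Passing to $\lim$ (for Liminf) or $\liminf$ (for Limsup) in $k$ yields the desired inclusion $(\Li_k F^k)(x) \subset (\Li_k F^k)(x')$ and likewise for $\Ls$. For (iv), fix $t \in (0,1]$ and $y \in (\Li_k F^k)(x)$, so $d_{\infty}(y, F^k(x)) \to 0$. Since $F^k$ is co-radiant, $tF^k(x) \subset F^k(tx)$, and combining with the scaling identity for the max-norm distance gives
\[
d_{\infty}(ty, F^k(tx)) \;\le\; d_{\infty}(ty, tF^k(x)) \;=\; t\, d_{\infty}(y, F^k(x)) \;\to\; 0,
\]
so $ty \in (\Li_k F^k)(tx)$. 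The same computation with $\lim$ replaced by $\liminf$ proves co-radiantness of $\Ls_k F^k$; the only bookkeeping subtlety is keeping $\lim$ versus $\liminf$ straight across the two cases.
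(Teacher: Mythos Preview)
Your argument is correct. For (i) and (ii) you follow essentially the paper's route, invoking the representation of Proposition~\ref{Important} together with the stability of normality under unions, intersections and closure from Proposition~\ref{propiedades}.

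For (iii) and (iv) you take a somewhat different path from the paper. The paper stays entirely on the set-algebraic side: it uses Proposition~\ref{Important} to write $\Ls_k F^k$ and $\Li_k F^k$ as intersections of closures of unions of the $F^k$, and then appeals to Propositions~\ref{utilparalaPropsiguiente} and~\ref{utilparalaPropsiguiente1} (closure, union and intersection preserve co-radiantness; and the obvious analogues for increasingness) to push the property through each operation. You instead work directly with the distance definition of $\Li$ and $\Ls$, using the monotonicity of $d_\infty(y,\cdot)$ under set inclusion for (iii) and the scaling identity $d_\infty(ty,tA)=t\,d_\infty(y,A)$ combined with $tF^k(x)\subset F^k(tx)$ for (iv). Both arguments are short and equally legitimate; yours is a bit more self-contained (it does not need the auxiliary Propositions~\ref{utilparalaPropsiguiente} and~\ref{utilparalaPropsiguiente1}), while the paper's version makes the structural reason---that these limits are built from operations each of which preserves the relevant property---more visible.
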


\begin{proof}
Statement (i) directly follows from the definitions of ${\mathrm{Liminf}}$%
\thinspace\ and\thinspace\ ${\mathrm{Limsup}}$. Statements (ii) and (iii)
follow from Proposition \ref{Important}. Finally, statement (iv) follows
from Propositions \ref{utilparalaPropsiguiente}, \ref%
{utilparalaPropsiguiente1} and \ref{Important}.
\end{proof}

\bigskip

We next give several propositions providing characterizations of some
properties of set-valued mappings in terms of their graphs.

\begin{proposition}
\label{caracterizaCreciente}If $F:\mathbb{R}_{+}^{n}\rightrightarrows 
\mathbb{R}_{+}^{m}$ takes only normal values, then 
\begin{equation*}
F\text{ is increasing}\Leftrightarrow gr(F)+\mathbb{R}_{+}^{n}\times
\{0\}=gr(F)
\end{equation*}
\end{proposition}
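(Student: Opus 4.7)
The plan is to unfold the definitions and check both implications directly by a simple translation argument on the graph. Since $0 \in \mathbb{R}_{+}^{n}$, the inclusion $gr(F) \subset gr(F) + \mathbb{R}_{+}^{n}\times\{0\}$ always holds, so the equality reduces to verifying $gr(F) + \mathbb{R}_{+}^{n}\times\{0\} \subset gr(F)$, and this latter inclusion is precisely what I will match against the increasingness condition.

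For the implication $\Rightarrow$, I would start with an element $(x,y)\in gr(F)+\mathbb{R}_{+}^{n}\times\{0\}$, write it as $(x_{0}+v,\,y)$ with $y\in F(x_{0})$ and $v\in\mathbb{R}_{+}^{n}$, observe that $x_{0}\leq x_{0}+v=x$, and invoke the assumption that $F$ is increasing to conclude $y\in F(x_{0})\subset F(x)$, hence $(x,y)\in gr(F)$.

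For the implication $\Leftarrow$, I would take $x,y\in\mathbb{R}_{+}^{n}$ with $x\leq y$ and an arbitrary $z\in F(x)$, write $(y,z)=(x,z)+(y-x,0)$, note that $(x,z)\in gr(F)$ and $(y-x,0)\in \mathbb{R}_{+}^{n}\times\{0\}$, and apply the assumed equality to get $(y,z)\in gr(F)$, that is, $z\in F(y)$. This gives $F(x)\subset F(y)$ as required.

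There is no real obstacle here; the argument is a one-line translation in each direction. It is worth noting that the hypothesis that $F$ take only normal values is actually not used in this proof, so the characterization holds for arbitrary set-valued mappings. Presumably the normality is retained in the statement for consistency with the other characterizations (of co-radiantness and of related properties) that follow, where the normality of the values does play a genuine role.
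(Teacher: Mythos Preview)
Your proof is correct and matches the paper's own argument essentially line for line: both directions are handled by the same translation-on-the-graph reasoning, and the paper likewise reduces the equality to the nontrivial inclusion $gr(F)+\mathbb{R}_{+}^{n}\times\{0\}\subset gr(F)$. Your observation that the normality hypothesis is not actually invoked is accurate; the paper's proof does not use it either.
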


\begin{proof}
$\Rightarrow $ ) Obviously, $gr(F)\subset gr(F)+\mathbb{R}_{+}^{n}\times
\{0\}$. To prove the opposite inclusion, let $\left( x,y\right) \in gr(F)$
and $p\in \mathbb{R}_{+}^{n};$ then $y\in F\left( x\right) \subset F\left(
x+p\right) ,$ so we conclude that $\left( x,y\right) +\left( p,0\right) \in
gr(F).$

$\Leftarrow $ ) Let $x^{\prime }\geq x\in \mathbb{R}_{+}^{n}$ and $y\in F(x)$%
, then $(x^{\prime },y)=(x,y)+(x^{\prime }-x,0)\in gr(F)+\mathbb{R}%
_{+}^{n}\times \{0\}=gr(F)$, therefore $y\in F(x^{\prime }).$
\end{proof}

\begin{proposition}
\label{caracterizaNormalvalues}Let $F:\mathbb{R}_{+}^{n}\rightrightarrows 
\mathbb{R}_{+}^{m}$. Then%
\begin{equation*}
F\text{ takes only normal values}\Leftrightarrow \left( gr(F)+\{0\}\times (-%
\mathbb{R}_{+}^{m})\right) \cap \mathbb{R}_{+}^{n}\times \mathbb{R}%
_{+}^{m}=gr(F).
\end{equation*}
\end{proposition}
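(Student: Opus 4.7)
The plan is to unpack what the set $gr(F)+\{0\}\times(-\mathbb{R}_+^m)$ means: a pair $(x,y)\in \mathbb{R}_+^n\times\mathbb{R}_+^m$ lies in it precisely when there exists $y'\in F(x)$ with $y\leq y'$, i.e.\ when $y$ sits below some element of $F(x)$ in the componentwise order. Once this reformulation is in hand, the equivalence with normality of the values is almost immediate, since normality of $F(x)$ says exactly that every $y\in[0,y']$ with $y'\in F(x)$ belongs to $F(x)$.

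For the forward direction, I would prove the set equality by double inclusion. The inclusion $gr(F)\subset (gr(F)+\{0\}\times(-\mathbb{R}_+^m))\cap\mathbb{R}_+^n\times\mathbb{R}_+^m$ is trivial from $0\in -\mathbb{R}_+^m$ together with $gr(F)\subset \mathbb{R}_+^n\times\mathbb{R}_+^m$. For the reverse inclusion, take $(x,y)$ in the left-hand set; writing $(x,y)=(x',y')+(0,-z)$ with $(x',y')\in gr(F)$ and $z\in \mathbb{R}_+^m$, we read off $x'=x$ and $0\leq y=y'-z\leq y'$. Since $y'\in F(x)$ and $F(x)$ is normal, $y\in F(x)$, i.e.\ $(x,y)\in gr(F)$.

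For the backward direction, I would fix $x\in \mathbb{R}_+^n$, take $y'\in F(x)$ and $y\in \mathbb{R}_+^m$ with $y\leq y'$, and show $y\in F(x)$. Write $(x,y)=(x,y')+(0,y-y')$, where $y-y'\in -\mathbb{R}_+^m$; then $(x,y)\in gr(F)+\{0\}\times(-\mathbb{R}_+^m)$, and also $(x,y)\in \mathbb{R}_+^n\times\mathbb{R}_+^m$. The standing equality therefore gives $(x,y)\in gr(F)$, i.e.\ $y\in F(x)$. Hence $F(x)$ is normal for every $x$.

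No serious obstacle is expected: the content is entirely bookkeeping with the coordinate decomposition of pairs in the Minkowski sum, exactly parallel to Proposition \ref{caracterizaCreciente}. The only mild care point is to remember to intersect with $\mathbb{R}_+^n\times\mathbb{R}_+^m$ on both sides of the equivalence, which is what forces $y\geq 0$ and thus aligns the condition with the Definition \ref{DefCoNormal} hypothesis $0\leq y\leq y'$ rather than just $y\leq y'$.
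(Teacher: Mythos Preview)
Your proof is correct and follows essentially the same approach as the paper's own argument: both directions are established by the same double-inclusion bookkeeping with the Minkowski-sum decomposition $(x,y)=(x,y')+(0,y-y')$, invoking normality of $F(x)$ for the forward inclusion and the graph equality for the backward one. Your write-up is in fact slightly more explicit than the paper's in tracking why the intersection with $\mathbb{R}_+^n\times\mathbb{R}_+^m$ is needed.
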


\begin{proof}
$\Rightarrow $ ) Obviously, $gr(F)\subset \left( gr(F)+\{0\}\times (-\mathbb{%
R}_{+}^{m})\right) \cap \mathbb{R}_{+}^{n}\times \mathbb{R}_{+}^{m}.$ To
prove the opposite inclusion, let $(x,y)\in gr(F)$ and $p\in \mathbb{R}%
_{+}^{m}$ be such that $y-p\in \mathbb{R}_{+}^{m}$, then, by the normality
of $F(x)$, one has $(x,y-p)\in gr(F)$.

$\Leftarrow $ ) If $x\in \mathbb{R}_{+}^{n}$ and $0\leq y^{\prime }\leq y\in
F(x)$ we have $(x,y^{\prime })=(x,y)+(0,y^{\prime }-y)\in \left(
gr(F)+\{0\}\times (-\mathbb{R}_{+}^{m})\right) \cap \mathbb{R}_{+}^{n}\times 
\mathbb{R}_{+}^{m}=gr(F).$ Hence $y^{\prime }\in F\left( x\right) ,$ which
proves that $F\left( x\right) $ is normal.
\end{proof}

\begin{proposition}
\label{IncNV}Let $F:\mathbb{R}_{+}^{n}\rightrightarrows \mathbb{R}_{+}^{m}$.
Then $F$ is increasing and takes only normal values if, and only if, for
every $\left( x,y\right) \in \mathbb{R}_{+}^{n}\times \mathbb{R}%
_{+}^{m}\setminus gr(F)$ one has%
\begin{equation}
gr(F)\cap (\left( x,y\right) +(-\mathbb{R}_{+}^{n})\times \mathbb{R}%
_{+}^{m})=\emptyset .  \label{disj part}
\end{equation}
\end{proposition}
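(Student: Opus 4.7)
The plan is to unpack the set-theoretic condition into an inequality form and then run the equivalence as two short arguments, essentially combining Propositions \ref{caracterizaCreciente} and \ref{caracterizaNormalvalues}. First, I would observe that $(x,y) + (-\mathbb{R}_+^n)\times \mathbb{R}_+^m$ is precisely the set of pairs $(x',y')$ with $x' \le x$ and $y' \ge y$ (where the inequalities are in the componentwise order on $\mathbb{R}^n$ and $\mathbb{R}^m$). So the displayed condition (\ref{disj part}) says that whenever $(x',y')\in gr(F)$ satisfies $x'\le x$ and $y'\ge y$ with $(x,y)\in \mathbb{R}_+^n\times \mathbb{R}_+^m$, one automatically has $(x,y)\in gr(F)$.

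For the forward direction, assume $F$ is increasing and takes only normal values, fix $(x,y)\notin gr(F)$, and suppose for contradiction that there exists $(x',y')\in gr(F)\cap ((x,y)+(-\mathbb{R}_+^n)\times \mathbb{R}_+^m)$. Then $x'\le x$ and $y\le y'$, so increasingness gives $y'\in F(x')\subset F(x)$, and then the normality of $F(x)$ together with $0\le y\le y'$ forces $y\in F(x)$, contradicting $(x,y)\notin gr(F)$.

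For the converse, I would verify each property separately by contrapositive. To check that $F$ is increasing, take $x\le x'$ in $\mathbb{R}_+^n$ and $y\in F(x)$; then $(x,y)\in gr(F)$ lies in the set $(x',y)+(-\mathbb{R}_+^n)\times \mathbb{R}_+^m$, so the intersection appearing in (\ref{disj part}) is nonempty at the test point $(x',y)$, forcing $(x',y)\in gr(F)$ and hence $y\in F(x')$. For normality, take $y\in F(x)$ and $0\le y'\le y$; the same reasoning with the test point $(x,y')$ (using $(x,y)\in gr(F)$ as the witness) yields $(x,y')\in gr(F)$.

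The argument is essentially bookkeeping, so there is no real obstacle; the only mild care point is keeping the directions of the inequalities straight, since the first component uses $-\mathbb{R}_+^n$ (reflecting the increasingness of $F$) while the second uses $+\mathbb{R}_+^m$ (reflecting that $F(x)$ being normal is a downward closure in the $y$-variable).
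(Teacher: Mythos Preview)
Your proof is correct and follows essentially the same approach as the paper: both directions hinge on the observation that condition~(\ref{disj part}) is the contrapositive of the implication ``$(x',y')\in gr(F)$, $x'\le x$, $y'\ge y$ $\Rightarrow$ $(x,y)\in gr(F)$,'' and the forward direction is dispatched by first applying increasingness and then normality exactly as in the paper. The only cosmetic difference is that for the converse the paper argues by contrapositive (starting from $y\notin F(x')$ and deducing $y\notin F(x)$), whereas you argue directly (starting from $y\in F(x)$ and deducing $y\in F(x')$); these are logically identical.
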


\begin{proof}
If $F$ is increasing and takes only normal values, the existence of $\left(
p_{1},p_{2}\right) \in \mathbb{R}_{+}^{n}\times \mathbb{R}_{+}^{m}$
satisfying $\left( x,y\right) +(-p_{1},p_{2})\in gr(F),$ that is, $%
y+p_{2}\in F\left( x-p_{1}\right) ,$ by the increasingness of $F$ would
imply $y+p_{2}\in F\left( x\right) ;$ hence, by the normality of $F\left(
x\right) ,$ we would have $y\in F\left( x\right) ,$ that is, $\left(
x,y\right) \in gr(F),$ a contradiction. To prove the converse implication,
let $x,x^{\prime }\in \mathbb{R}_{+}^{n}$ be such that $x\leq x^{\prime },$
and let $y\in \mathbb{R}_{+}^{m}\setminus F\left( x^{\prime }\right) .$
Since $\left( x,y\right) =\left( x^{\prime },y\right) +\left( x-x^{\prime
},0\right) \in \left( x,y\right) +(-\mathbb{R}_{+}^{n})\times \mathbb{R}%
_{+}^{m},$ by (\ref{disj part}) we have $\left( x,y\right) \notin gr(F),$
that is, $y\notin F\left( x\right) ,$ which shows that $F$ is increasing. To
prove that $F$ takes only normal values, let $x\in \mathbb{R}_{+}^{n}$ and $%
y,y^{\prime }\in \mathbb{R}_{+}^{m}$ be such that $y\leq y^{\prime }$ and $%
y\in \mathbb{R}_{+}^{m}\setminus F\left( x\right) .$ Since $\left(
x,y^{\prime }\right) =\left( x,y\right) +\left( 0,y^{\prime }-y\right) \in
\left( x,y\right) +(-\mathbb{R}_{+}^{n})\times \mathbb{R}_{+}^{m}),$ by (\ref%
{disj part}) we have $\left( x,y^{\prime }\right) \notin gr(F),$ that is, $%
y^{\prime }\notin F\left( x\right) ,$ which shows that $F\left( x\right) $
is normal.
\end{proof}

\section{Representations of ICR Mappings as Intersections of Elementary
Mappings}

In this section we will introduce two notions of elementary ICR mappings,
and we will show that they generate all the ICR\ mappings that satisfy some
suitable additional properties.

The following separation result of radiant sets by convex cones will be used
to separate graphs of co-radiant set-valued mappings.

\begin{lemma}
\label{Zaff}(see \cite[Proposition 3.4]{Zaffaroni}) A nonempty closed set $%
A\subset \mathbb{R}_{+}^{p}$ is radiant if, and only if, for every $x\in 
\mathbb{R}_{+}^{p}\setminus A$ there exists a cone $K_{A,x}\subset \mathbb{R}%
_{+}^{p}$ such that $x\in K_{A,x}$ and $A\cap (x+K_{A,x})=\emptyset .$ One
can take $K_{A,x}$ convex and such that $K_{A,x}\setminus \{0\}$ is open in $%
\mathbb{R}_{+}^{p},$ namely%
\begin{equation}
K_{A,x}:=\left\{ \dsum\limits_{i=1}^{p}\lambda _{i}\left(
x+r_{A,x}e_{i}\right) :\lambda _{i}>0\text{ for }i\in I_{+}\left( x\right) ,%
\text{ }\lambda _{i}\geq 0\text{ for }i\in I\setminus I_{+}\left( x\right)
\right\} \cup \left\{ 0\right\} ,  \label{cone}
\end{equation}%
with the $e_{i}^{\text{' }}$s denoting the unit vectors and $%
r_{A,x}:=d_{\infty }(x,A).$
\end{lemma}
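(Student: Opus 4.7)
The plan is to prove the equivalence by establishing the ``if'' direction with a short geometric argument and then constructing the cone for the ``only if'' direction with a careful estimate involving $d_\infty(x,A)$. Before starting, I would observe that any closed, nonempty, radiant set $A\subset\mathbb{R}_+^p$ must contain $0$: given $a\in A$, radiance gives $a/k\in A$ for every integer $k\geq 1$, and closedness forces the limit $0$ to lie in $A$. Hence every $x\in\mathbb{R}_+^p\setminus A$ is nonzero, so $I_+(x)\neq\emptyset$, and $r:=r_{A,x}=d_\infty(x,A)>0$.

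For the ``if'' direction, I would pick $a\in A$ and $t\in(0,1]$ and show $ta\in A$. Assuming otherwise, the hypothesis yields a cone $K\subset\mathbb{R}_+^p$ with $ta\in K$ and $A\cap(ta+K)=\emptyset$. Since $K$ is closed under nonnegative scaling, the point $(1-t)a=\frac{1-t}{t}(ta)$ belongs to $K$, so $a=ta+(1-t)a$ lies in $ta+K$. But $a\in A$, a contradiction.

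For the converse, I would verify that the cone $K_{A,x}$ defined in (\ref{cone}) has the claimed properties. The inclusion $K_{A,x}\subset\mathbb{R}_+^p$ and its convex-cone structure are immediate from the definition, since each generator $x+re_i$ lies in $\mathbb{R}_+^p$ and the positivity pattern on $(\lambda_i)_{i\in I_+(x)}$ is preserved under positive scaling and convex combinations. The membership $x\in K_{A,x}$ is witnessed by the explicit coefficients $\lambda_i:=x_i/(r+\sum_j x_j)$, which satisfy the positivity constraints because $I_+(x)$ is precisely the set where $x_i>0$. Openness of $K_{A,x}\setminus\{0\}$ in $\mathbb{R}_+^p$ follows because for any $y=sx+r\lambda\in K_{A,x}\setminus\{0\}$ the scalar $s$ and coordinates $\lambda_i$ depend continuously on $y$ (solving the linear system explicitly yields $s=\sum_j y_j/(r+\sum_j x_j)$ and $\lambda_i$ in closed form), so the strict inequalities $\lambda_i>0$ for $i\in I_+(x)$ persist under small perturbations of $y$ within $\mathbb{R}_+^p$.

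The main obstacle is the separation $A\cap(x+K_{A,x})=\emptyset$, which I would establish by contradiction. Suppose $a=x+k\in A$ for some $k\in K_{A,x}$. The case $k=0$ gives $a=x\in A$, contradicting $x\notin A$. Otherwise $k=\sum_{i=1}^p\lambda_i(x+re_i)=sx+r\lambda$ with $s=\sum_i\lambda_i>0$ (using $I_+(x)\neq\emptyset$ and $\lambda_i>0$ there), so $a=(1+s)x+r\lambda$. Applying radiance with $t:=1/(1+s)\in(0,1)$ gives $ta=x+r\lambda/(1+s)\in A$, and the key norm estimate
\begin{equation*}
\|ta-x\|_\infty=\frac{r\max_i\lambda_i}{1+s}\leq\frac{rs}{1+s}<r=d_\infty(x,A)
\end{equation*}
contradicts $ta\in A$. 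The crux of the argument is this estimate: the specific choice $r=d_\infty(x,A)$ as the ``radius'' of the generators $x+re_i$ is exactly what forces radiance to produce a point of $A$ strictly closer to $x$ than the distance from $x$ to $A$.
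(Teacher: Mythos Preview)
Your proof is correct and follows essentially the same route as the paper's. Both directions coincide: for the ``if'' part you use the identity $a=ta+\frac{1-t}{t}(ta)$ exactly as the paper does, and for the ``only if'' part your contradiction argument---scaling $a=(1+s)x+r\lambda$ by $1/(1+s)$ and bounding $\|ta-x\|_\infty\leq rs/(1+s)<r$---is precisely the paper's computation, only with the norm estimate spelled out (the paper just says this ``contradicts the definition of $r_{A,x}$''). Your preliminary observation that $0\in A$, whence $I_+(x)\neq\emptyset$ and $r>0$, and your explicit linear-algebra justification of the openness of $K_{A,x}\setminus\{0\}$ are welcome additions that the paper leaves implicit.
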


\begin{proof}
Assume first that $A$ is radiant. For $x\in \mathbb{R}_{+}^{p}\setminus A,$
the cone $K_{A,x}$ is convex, $K_{A,x}\setminus \{0\}$ is open in $\mathbb{R}%
_{+}^{p}$, and $x\in K_{A,x}$ (indeed, take $\lambda _{i}:=\frac{x_{i}}{%
\dsum\limits_{j=1}^{p}x_{j}+r_{A,x}})$. Assume that $x+\dsum%
\limits_{i=1}^{p}\lambda _{i}\left( x+r_{A,x}e_{i}\right) \in A$ for some $%
\lambda _{i}>0$ ($i\in I_{+}\left( x\right) $) and $\lambda _{i}\geq 0$ ($%
i\in I\setminus I_{+}\left( x\right) $). Then, as $A$ is radiant, we have $x+%
\frac{r_{A,x}}{1+\dsum\limits_{i=1}^{p}\lambda _{i}}\dsum\limits_{i=1}^{p}%
\lambda _{i}e_{i}=\frac{1}{1+\dsum\limits_{i=1}^{p}\lambda _{i}}\left(
x+\dsum\limits_{i=1}^{p}\lambda _{i}\left( x+r_{A,x}e_{i}\right) \right) \in
A,$ which contradicts the definition of $r_{A,x}.$ This proves that $A\cap
(x+K_{A,z})=\emptyset .$

Conversely, let $a\in A\setminus \left\{ 0\right\} $ and $t\in (0,1].$ If $%
ta\notin A,$ by assumption there exists a cone $K_{A,ta}\subset \mathbb{R}%
_{+}^{p}$ such that $ta\in K_{A,ta}$ and $A\cap (ta+K_{A,ta})=\emptyset .$
However this is impossible, since $a=ta+\frac{1-t}{t}ta\in ta+K_{A,ta}.$
Hence $ta\in A,$ which proves that $A$ is radiant.
\end{proof}

\begin{proposition}
\label{PropPreparatoria} If $F:\mathbb{R}_{+}^{n}\rightrightarrows \mathbb{R}%
_{+}^{m}$ is co-radiant and $gr(F)$ is closed, then

\begin{itemize}
\item[(a)] For every $\left( x,y\right) \in \mathbb{R}_{+}^{n}\times \mathbb{%
R}_{+}^{m}\setminus gr(F)$ there exists a convex cone $K$ in $\mathbb{R}%
_{+}^{n}\times \mathbb{R}_{+}^{m}$ containing $\left( x,y\right) $ such that 
$K\setminus \{\left( 0,0\right) \}$ is open in $\mathbb{R}_{+}^{n}\times 
\mathbb{R}_{+}^{m}$ and%
\begin{equation}
gr(F)\cap (\left( x,y\right) +K\setminus \left\{ \left( 0,0\right) \right\}
)=\emptyset ,  \label{disj gen}
\end{equation}%
namely one can take $K:=K_{gr\left( F\right) ,\left( x,y\right) },$ the cone
defined in (\ref{cone}), with $p:=n+m,$ $z:=\left( x,y\right) ,$ $A:=gr(F)$
and $r_{gr(F),\left( x,y\right) }:=d_{\infty }(\left( x,y\right) ,gr(F)).$

\item[(b)] If $F$ takes only normal values, then for every $\left(
x,y\right) \in \mathbb{R}_{+}^{n}\times \mathbb{R}_{+}^{m}\setminus gr(F)$
there exists a convex cone $K$ in $\mathbb{R}_{+}^{n}\times \mathbb{R}%
_{+}^{m}$ containing $\left( x,y\right) $ such that $K\setminus \{\left(
0,0\right) \}$ is open in $\mathbb{R}_{+}^{n}\times \mathbb{R}_{+}^{m}$ and%
\begin{equation}
gr(F)\cap (\left( x,y\right) +K+\{0\}\times \mathbb{R}_{+}^{m})=\emptyset ,
\label{disj}
\end{equation}%
namely one can take $K:=K_{gr\left( F\right) ,\left( x,y\right) },$ the cone
considered in (a).

\item[(c)] If $F$ is increasing and takes only normal values, then for every 
$\left( x,y\right) \in \mathbb{R}_{+}^{n}\times \mathbb{R}_{+}^{m}\setminus
gr(F)$ there exists a convex cone $K$ in $\mathbb{R}_{+}^{n}\times \mathbb{R}%
_{+}^{m}$ containing $\left( x,y\right) $ such that $K\setminus \{\left(
0,0\right) \}$ is open in $\mathbb{R}_{+}^{n}\times \mathbb{R}_{+}^{m}$ and%
\begin{equation}
gr(F)\cap \left( (x,y)+K+(-\mathbb{R}_{+}^{n})\times \mathbb{R}%
_{+}^{m}\right) =\emptyset ,  \label{disj ICR}
\end{equation}%
namely one can take $K:=K_{gr\left( F\right) ,\left( x,y\right) },$ the cone
considered in (a).
\end{itemize}
\end{proposition}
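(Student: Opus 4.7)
The plan is to obtain (a) as a direct application of Lemma \ref{Zaff} to the graph of $F$, and then to deduce (b) and (c) from (a) by pushing any ``extra'' summand in the displacement back into the cone $K$ alone, using the normality of the values of $F$ and, for (c), its increasingness.

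For (a), observe that $gr(F)$ is nonempty by Definition \ref{casicorad}, closed by hypothesis, and radiant by Proposition \ref{caracterizacionRadiante}. Hence Lemma \ref{Zaff}, applied with $p:=n+m$, $A:=gr(F)$ and $x:=(x,y)$, produces the convex cone $K:=K_{gr(F),(x,y)}$ in $\mathbb{R}_{+}^{n}\times \mathbb{R}_{+}^{m}$, containing $(x,y)$, with $K\setminus \{(0,0)\}$ open in $\mathbb{R}_{+}^{n}\times \mathbb{R}_{+}^{m}$ and satisfying $gr(F)\cap ((x,y)+K)=\emptyset$. Since $(x,y)\notin gr(F)$, the value $(u,v)=(0,0)$ contributes nothing, and (\ref{disj gen}) follows.

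For (b), with $K$ as in (a), suppose towards a contradiction that $(u,v)\in K$ and $w\in \mathbb{R}_{+}^{m}$ satisfy $(x,y)+(u,v)+(0,w)\in gr(F)$, i.e.\ $y+v+w\in F(x+u)$. Since $0\leq y+v\leq y+v+w$, the normality of $F(x+u)$ gives $y+v\in F(x+u)$, so $(x,y)+(u,v)\in gr(F)$. If $(u,v)\neq (0,0)$ this contradicts (a); if $(u,v)=(0,0)$ it contradicts $(x,y)\notin gr(F)$. For (c) the same argument applies, with one additional step: an element of $gr(F)\cap ((x,y)+K+(-\mathbb{R}_{+}^{n})\times \mathbb{R}_{+}^{m})$ produces $(u,v)\in K$, $p\in \mathbb{R}_{+}^{n}$ and $w\in \mathbb{R}_{+}^{m}$ with $y+v+w\in F(x+u-p)$; the increasingness of $F$ and $x+u-p\leq x+u$ yield $y+v+w\in F(x+u)$, and then the normality argument of (b) again produces a point of $gr(F)\cap ((x,y)+K)$, contradicting (a).

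The main work is conceptual rather than technical: once (a) is recognized as Zaffaroni's separation lemma applied to $gr(F)$, parts (b) and (c) reduce to a single observation, namely that adding positive components in the $y$-direction or subtracting positive components in the $x$-direction can be absorbed by the normality (respectively, the increasingness plus normality) of $F$. The only bookkeeping needed is to check that the displaced point $(x+u,y+v)$ lies in $\mathbb{R}_{+}^{n}\times \mathbb{R}_{+}^{m}$, which is automatic from $(u,v)\in K\subset \mathbb{R}_{+}^{n}\times \mathbb{R}_{+}^{m}$, so that normality of $F(x+u)$ may legitimately be invoked.
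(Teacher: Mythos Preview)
Your proof is correct and follows essentially the same route as the paper: part (a) is Lemma \ref{Zaff} applied to the radiant closed set $gr(F)$, and parts (b) and (c) are obtained by absorbing the extra summand into the cone via normality and increasingness. The only cosmetic difference is that for (c) the paper invokes Proposition \ref{IncNV} directly, whereas you unpack its content (increasingness then normality) by hand; the arguments are otherwise identical.
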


\begin{proof}
(a) It is an immediate consequence of Lemma \ref{Zaff}.

(b) For $K:=K_{gr\left( F\right) ,\left( x,y\right) },$ equality (\ref{disj}%
) holds, since the existence of $\left( k_{1},k_{2}\right) \in K_{gr\left(
F\right) ,\left( x,y\right) }$ and $p\in \mathbb{R}_{+}^{m}$ satisfying $%
\left( x,y\right) +\left( k_{1},k_{2}\right) +(0,p)\in gr(F),$ that is, $%
y+k_{2}+p\in F\left( x+k_{1}\right) ,$ by the normality of $F\left(
x+k_{1}\right) $ would imply $y+k_{2}\in F\left( x+k_{1}\right) ,$ that is, $%
\left( x,y\right) +\left( k_{1},k_{2}\right) \in gr(F),$ a contradiction
with (\ref{disj gen}).

(c) In view of Proposition \ref{IncNV}, the convex cone $K:=K_{gr\left(
F\right) ,\left( x,y\right) }$ satisfies (\ref{disj ICR}).
\end{proof}

\bigskip

Motivated by Proposition \ref{PropPreparatoria}(c), given a set $A\subset 
\mathbb{R}_{+}^{n}\times \mathbb{R}_{+}^{m}$ and a point $\left( x,y\right)
\in \mathbb{R}_{+}^{n}\times \mathbb{R}_{+}^{m}\setminus A$ we introduce the
mapping $E_{A,\left( x,y\right) }:\mathbb{R}_{+}^{n}\rightrightarrows 
\mathbb{R}_{+}^{m}$ defined by%
\begin{equation*}
gr(E_{A,\left( x,y\right) }):=\mathbb{R}_{+}^{n}\times \mathbb{R}%
_{+}^{m}\setminus \left( (x,y)+K_{A,\left( x,y\right) }\setminus \left\{
\left( 0,0\right) \right\} +(-\mathbb{R}_{+}^{n})\times \mathbb{R}%
_{+}^{m}\right) ,
\end{equation*}%
with $K_{A,\left( x,y\right) }$ being the cone defined in Proposition \ref%
{PropPreparatoria}(a).

\begin{proposition}
\label{SobreLasFuv}For $A\subset \mathbb{R}_{+}^{n}\times \mathbb{R}_{+}^{m}$
and $\left( x,y\right) \in \mathbb{R}_{+}^{n}\times \mathbb{R}%
_{+}^{m}\setminus A,$ one has:

\begin{itemize}
\item[(i)] $gr(E_{A,\left( x,y\right) })$ is closed.

\item[(ii)] $dom(E_{A,\left( x,y\right) })=\mathbb{R}_{+}^{n}$ if, and only
if, $y\neq 0$.

\item[(iii)] $E_{A,\left( x,y\right) }$ is ICR and takes only normal values.
\end{itemize}
\end{proposition}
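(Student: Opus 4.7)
Set $z := (x,y)$, $K := K_{A,z}$ and
\[
S := z + (K \setminus \{(0,0)\}) + (-\mathbb{R}_+^n) \times \mathbb{R}_+^m,
\]
so $gr(E_{A,(x,y)}) = (\mathbb{R}_+^n \times \mathbb{R}_+^m) \setminus S$. Unfolding the Minkowski sum, $(u,v) \in S$ exactly when $(u,v) \in \mathbb{R}_+^{n+m}$ and there exists $k = (k_1,k_2) \in K \setminus \{0\}$ with $u \leq x + k_1$ and $v \geq y + k_2$ componentwise. Throughout I exploit two facts about $K$: it is a convex cone, and $z$ itself belongs to $K$ (take $\lambda_i := z_i/(r + \sum_\ell z_\ell)$ in Lemma \ref{Zaff}).

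For (ii), if $y \neq 0$ pick $j_0$ with $y_{j_0} > 0$; then for every $u \in \mathbb{R}_+^n$ and every $k \in K \setminus \{0\}$, $y_{j_0} + k_{2,j_0} \geq y_{j_0} > 0$ forbids $(u,0) \in S$, so $(u,0) \in gr(E_{A,z})$ and $dom(E_{A,z}) = \mathbb{R}_+^n$. If $y = 0$, then for every $v \in \mathbb{R}_+^m$ the choice $\lambda_i := 1$ for $i \leq n$ and $\lambda_{n+j} := v_j/r$ for $j \leq m$ yields $k \in K \setminus \{0\}$ with $k_2 = v$ and $k_1 \geq 0$, so $(0,v) = z + k + (-p_1, 0)$ with $p_1 := x + k_1 \geq 0$, showing $0 \notin dom(E_{A,z})$ and $dom(E_{A,z}) \neq \mathbb{R}_+^n$.

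For (iii), increasingness and normality of values follow from Proposition \ref{IncNV}: given $(u,v) = z + k + q \in S$ and $q' \in (-\mathbb{R}_+^n) \times \mathbb{R}_+^m$, the cone property yields $q + q' \in (-\mathbb{R}_+^n) \times \mathbb{R}_+^m$, so $(u,v) + q' = z + k + (q+q') \in S$ whenever $(u,v) + q' \in \mathbb{R}_+^{n+m}$, which is exactly the disjointness of Proposition \ref{IncNV}. For co-radiance, by Proposition \ref{caracterizacionRadiante} it suffices to show $gr(E_{A,z})$ is radiant; equivalently, if $t(u,v) = z + k + q \in S$ for some $t \in (0,1]$, then, using $z, k \in K$,
\[
(u,v) = z + \bigl[(1/t - 1)\, z + (1/t)\, k\bigr] + (1/t)\, q
\]
exhibits the bracketed element as a member of $K \setminus \{0\}$ (a nonnegative combination within the convex cone $K$, nonzero because $(1/t) k \neq 0$) and $(1/t) q$ as an element of $(-\mathbb{R}_+^n) \times \mathbb{R}_+^m$, giving $(u,v) \in S$.

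The main obstacle is (i), the closedness of $gr(E_{A,z})$, which is equivalent to the relative openness of $S$ in $\mathbb{R}_+^{n+m}$. My plan is to prove the key lemma that every $(u,v) \in S$ admits a witness $k \in K \setminus \{0\}$ with \emph{strict} inequalities $u_i < x_i + k_{1,i}$ whenever $u_i \geq x_i$, and $v_j > y_j + k_{2,j}$ whenever $v_j > 0$; the remaining coordinates (where $u_i < x_i$, or $v_j = y_j = 0$) are automatically preserved by any small $\mathbb{R}_+^{n+m}$-perturbation. Such a witness is built from a given one $k = \sum_i \lambda_i(z + r e_i)$ by a two-stage coordinated adjustment of the $\lambda_i$'s: first, increasing each $\lambda_i$ for $i \leq n$ by a small common $\delta > 0$ while simultaneously decreasing each $\lambda_{n+j}$ by the amount needed to cancel the induced change in $k_{2,j}$ (feasible because $\lambda_{n+j} > 0$ whenever $y_j > 0$, by the $I_+(z)$ clause) leaves every $k_{2,j}$ unchanged and strictly increases every $k_{1,i}$; second, a small further decrease of each $\lambda_{n+j}$ (for the indices $j$ with $v_j > 0$ where the current $k_{2,j} = v_j - y_j$) makes the corresponding $v$-inequalities strict at the price of a controlled loss of the just-gained $u$-slack. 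With all strict inequalities in place, continuity absorbs small perturbations and $(u',v') \in S$ for every $(u',v') \in \mathbb{R}_+^{n+m}$ sufficiently close to $(u,v)$. Organizing these coordinated perturbations within the rigid generator structure of $K$, and handling the mixed boundary coordinates, is the main technical step.
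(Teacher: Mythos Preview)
Your arguments for (ii) and (iii) are correct and follow the same lines as the paper's proof (with cosmetic differences in the witness points chosen). In particular, your decomposition $(u,v) = z + \bigl[(1/t-1)z + (1/t)k\bigr] + (1/t)q$ for co-radiance is clean and works exactly because $z \in K$ and $K$ is a convex cone in $\mathbb{R}_+^{n+m}$.

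For (i), your plan is workable but far heavier than what is needed, and heavier than what the paper does. The paper dispatches (i) in one line by citing the fact, already secured in Lemma~\ref{Zaff}, that $K\setminus\{0\}$ is \emph{relatively open} in $\mathbb{R}_+^{n+m}$. You are effectively re-deriving this openness by hand, solving a coupled linear system in the generator coefficients $\lambda_i$ (your two-stage adjustment) to produce slack. You can bypass all of that: given any witness $k\in K\setminus\{0\}$, relative openness yields $\epsilon>0$ such that every $k'\in\mathbb{R}_+^{n+m}$ with $\|k'-k\|_\infty<\epsilon$ still lies in $K\setminus\{0\}$. Now simply take $k'_{1,i}:=k_{1,i}+\epsilon/2$ for $i\le n$, and $k'_{2,j}:=k_{2,j}-\epsilon/2$ when $k_{2,j}>0$ (shrinking $\epsilon$ so this stays nonnegative), $k'_{2,j}:=0$ when $k_{2,j}=0$. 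This $k'$ lies in $K\setminus\{0\}$ and delivers all your strict inequalities in one stroke; for the residual case $k_{2,j}=0$, the generator formula forces $y_j=0$, so $v_j=p_{2,j}$ and the inequality $v_j>y_j+k'_{2,j}=0$ is automatic whenever $v_j>0$. With $k'$ fixed, a small $\mathbb{R}_+^{n+m}$-perturbation of $(u,v)$ is absorbed entirely by $q$, exactly as you outlined. No coordinated manipulation of the $\lambda_i$'s is required.
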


\begin{proof}
(i) It is a consequence of the fact that $K_{A,\left( x,y\right) }\setminus
\left\{ \left( 0,0\right) \right\} $ is open in $\mathbb{R}_{+}^{n}\times 
\mathbb{R}_{+}^{m}.$

(ii) If $y\neq 0$ and $u\in \mathbb{R}_{+}^{n},$ then $\left( u,\frac{1}{2}%
y\right) \notin (x,y)+K_{A,\left( x,y\right) }\setminus \left\{ \left(
0,0\right) \right\} +(-\mathbb{R}_{+}^{n})\times \mathbb{R}_{+}^{m},$ since $%
K_{A,\left( x,y\right) }\subset \mathbb{R}_{+}^{n}\times \mathbb{R}_{+}^{m}.$
Therefore $\frac{1}{2}y\in E_{A,\left( x,y\right) }\left( u\right) ,$ and
thus $u\in dom(E_{A,\left( x,y\right) }).$ This proves that $dom(E_{A,\left(
x,y\right) })=\mathbb{R}_{+}^{n}.$ Conversely, assume that $y=0,$ and let $%
v\in \mathbb{R}_{+}^{m}$. Since%
\begin{equation*}
\left( x,v\right) =\left( x,0\right) +\left( x,0\right) +\left( -x,v\right)
\in (x,0)+K_{A,\left( x,y\right) }\setminus \left\{ \left( 0,0\right)
\right\} +(-\mathbb{R}_{+}^{n})\times \mathbb{R}_{+}^{m},
\end{equation*}%
it follows that $\left( x,v\right) \notin gr(E_{A,\left( x,y\right) }),$
which shows that $x\notin dom(E_{A,\left( x,y\right) }).$ Thus $%
dom(E_{A,\left( x,y\right) })\neq \mathbb{R}_{+}^{n}.$ This proves the "only
if" statement.

(iii) Let $\left( u,v\right) \in \mathbb{R}_{+}^{n}\times \mathbb{R}%
_{+}^{m}\setminus gr(E_{A,\left( x,y\right) })$ and $\left( u^{\prime
},v^{\prime }\right) \in \left( u,v\right) +(-\mathbb{R}_{+}^{n})\times 
\mathbb{R}_{+}^{m}.$ Since%
\begin{eqnarray*}
\left( u^{\prime },v^{\prime }\right) &=&\left( u,v\right) +\left( u^{\prime
}-u,v^{\prime }-v\right) \\
&\in &(x,y)+K_{A,\left( x,y\right) }\setminus \left\{ \left( 0,0\right)
\right\} +(-\mathbb{R}_{+}^{n})\times \mathbb{R}_{+}^{m}+(-\mathbb{R}%
_{+}^{n})\times \mathbb{R}_{+}^{m} \\
&=&(x,y)+K_{A,\left( x,y\right) }\setminus \left\{ \left( 0,0\right)
\right\} +(-\mathbb{R}_{+}^{n})\times \mathbb{R}_{+}^{m},
\end{eqnarray*}%
we have $\left( u^{\prime },v^{\prime }\right) \notin gr(E_{A,\left(
x,y\right) }),$ which proves that%
\begin{equation*}
gr(E_{A,\left( x,y\right) })\cap \left( \left( u,v\right) +(-\mathbb{R}%
_{+}^{n})\times \mathbb{R}_{+}^{m}\right) =\emptyset .
\end{equation*}%
Hence, by Proposition \ref{IncNV}, the mapping $E_{A,\left( x,y\right) }$ is
increasing and takes only normal values.

Let $\left( u,v\right) \in (x,y)+K_{A,\left( x,y\right) }\setminus \left\{
\left( 0,0\right) \right\} +(-\mathbb{R}_{+}^{n})\times \mathbb{R}_{+}^{m}$
and $t\in \left( 0,1\right] .$ Since $K_{A,\left( x,y\right) }$ is a cone
and contains the point $\left( x,y\right) ,$ we have%
\begin{eqnarray*}
t\left( u,v\right) &\in &t(x,y)+K_{A,\left( x,y\right) }\setminus \left\{
\left( 0,0\right) \right\} +(-\mathbb{R}_{+}^{n})\times \mathbb{R}_{+}^{m} \\
&=&(x,y)+\left( t-1\right) (x,y)+K_{A,\left( x,y\right) }\setminus \left\{
\left( 0,0\right) \right\} +(-\mathbb{R}_{+}^{n})\times \mathbb{R}_{+}^{m} \\
&\subset &(x,y)+K_{A,\left( x,y\right) }\setminus \left\{ \left( 0,0\right)
\right\} +(-\mathbb{R}_{+}^{n})\times \mathbb{R}_{+}^{m},
\end{eqnarray*}%
which proves that $(x,y)+K_{A,\left( x,y\right) }\setminus \left\{ \left(
0,0\right) \right\} +(-\mathbb{R}_{+}^{n})\times \mathbb{R}_{+}^{m}$ is
co-radiant. Hence $gr(E_{A,\left( x,y\right) })$ is radiant and therefore,
by Proposition \ref{caracterizacionRadiante}, the mapping $E_{A,\left(
x,y\right) }$ is co-radiant.
\end{proof}

\bigskip

We will denote by $\mathcal{A}$ the class of nonempty closed radiant sets $%
A\subset \mathbb{R}_{+}^{n}\times \mathbb{R}_{+}^{m}$ such that $A+\mathbb{R}%
_{+}^{n}\times \{0\}=A$ and $\left( A+\{0\}\times (-\mathbb{R}%
_{+}^{m})\right) \cap \mathbb{R}_{+}^{n}\times \mathbb{R}_{+}^{m}=A,$ and we
set $\mathcal{E}:=\left\{ E_{A,\left( x,y\right) }:A\in \mathcal{A},\text{ }%
\left( x,y\right) \in \mathbb{R}_{+}^{n}\times \mathbb{R}_{+}^{m}\setminus
A\right\} .$

The follolwing theorem is one of the main results of this paper.

\begin{theorem}
\label{Intersection}A mapping $F:\mathbb{R}_{+}^{n}\rightrightarrows \mathbb{%
R}_{+}^{m}$ is ICR, takes only normal values and has a closed graph if, and
only if, there exists a set $A\in \mathcal{A}$ such that 
\begin{equation*}
F=\bigcap_{\left( x,y\right) \in \mathbb{R}_{+}^{n}\times \mathbb{R}%
_{+}^{m}\setminus A}E_{A,\left( x,y\right) },
\end{equation*}%
namely one can take%
\begin{equation}
A:=gr(F).  \label{Zeta}
\end{equation}
\end{theorem}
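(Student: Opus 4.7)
My plan is to handle the two implications separately.

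The $(\Leftarrow)$ direction is routine: by Proposition \ref{SobreLasFuv}, every $E_{A,(x,y)}$ with $A\in\mathcal{A}$ is ICR, takes only normal values, and has closed graph, and each of these properties descends to arbitrary intersections (co-radiance by Proposition \ref{utilparalaPropsiguiente1}, normal-valuedness by Proposition \ref{propiedades}(iii), closed graph because $gr(\bigcap F^{i})=\bigcap gr(F^{i})$, and increasingness by a direct check).

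For the $(\Rightarrow)$ direction, set $A:=gr(F)$. I first check $A\in\mathcal{A}$: nonemptiness and closedness come from the hypotheses; radiantness is Proposition \ref{caracterizacionRadiante}; $A+\mathbb{R}_{+}^{n}\times\{0\}=A$ is Proposition \ref{caracterizaCreciente}; and $(A+\{0\}\times(-\mathbb{R}_{+}^{m}))\cap\mathbb{R}_{+}^{n}\times\mathbb{R}_{+}^{m}=A$ is Proposition \ref{caracterizaNormalvalues}. The easy inclusion $F\subset\bigcap_{(x,y)\notin A}E_{A,(x,y)}$ then follows at once from Proposition \ref{PropPreparatoria}(c): the disjointness of $gr(F)$ from $(x,y)+K_{A,(x,y)}+(-\mathbb{R}_{+}^{n})\times\mathbb{R}_{+}^{m}$ persists when $K_{A,(x,y)}$ is shrunk to $K_{A,(x,y)}\setminus\{(0,0)\}$, so $gr(F)\subset gr(E_{A,(x,y)})$ for each such $(x,y)$.

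The crux is the reverse inclusion: given $(u,v)\notin gr(F)$, exhibit $(x,y)\notin gr(F)$ together with $(k_{1},k_{2})\in K_{A,(x,y)}\setminus\{(0,0)\}$ and $a,b\geq 0$ such that $(u-x,v-y)=(k_{1},k_{2})+(-a,b)$; equivalently, $k_{1}\geq u-x$ and $0\leq k_{2}\leq v-y$ componentwise, which already forces $y\leq v$. My plan is to take $(x,y)\notin gr(F)$ with $x\geq u$ coordinatewise, $y_{j}<v_{j}$ on coordinates with $v_{j}>0$, and $y_{j}=0$ on coordinates with $v_{j}=0$; such $(x,y)$ exists, for otherwise a limit $y\uparrow v$ together with closedness of $gr(F)$ would place $(u,v)$ in $gr(F)$, contradicting the hypothesis. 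With $(x,y)$ fixed I set the cone multipliers $\lambda_{i}$ equal to a small uniform $\mu>0$ on the indices forced to be strictly positive, and $\lambda_{n+j}=0$ whenever $v_{j}=0$; for $\mu$ small enough the resulting $(k_{1},k_{2})$ has $k_{2}$-part componentwise dominated by $v-y$ (identically zero on coordinates with $v_{j}=0$), while its $k_{1}$-part is automatically nonnegative and thus $\geq 0\geq u-x$. The main obstacle is exactly this coordinated choice: the naive candidate $(x,y)=(u,v)$ fails whenever $v\neq 0$, because then every nonzero element of $K_{A,(u,v)}$ must have a nonzero last-$m$-coordinate part, so the separation point must be chosen strictly ``below and to the right'' of $(u,v)$, with the zero coordinates of $v$ requiring the special care described above.
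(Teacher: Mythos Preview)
Your argument is correct, including the delicate handling of the coordinates where $v_{j}=0$ and the existence of the auxiliary point $(x,y)$; the closed-graph limit argument and the choice of uniform small multipliers $\mu$ do the job. However, the route you take for the reverse inclusion is considerably more laborious than the paper's.

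The paper exploits the radiant structure directly. Given $(u,v)\notin gr(F)$, one simply scales down along the ray through the origin: since $gr(F)$ is closed and $(u,v)\neq (0,0)$ (the latter because $gr(F)$ is nonempty, closed and radiant, hence contains the origin), there is $\alpha\in(0,1)$ with $\alpha(u,v)\notin gr(F)$. Lemma~\ref{Zaff} guarantees that the base point $\alpha(u,v)$ itself lies in $K_{A,\alpha(u,v)}$, so the positive multiple $\frac{1-\alpha}{\alpha}\,\alpha(u,v)=(1-\alpha)(u,v)$ lies in $K_{A,\alpha(u,v)}\setminus\{(0,0)\}$. Writing
\[
(u,v)=\alpha(u,v)+\tfrac{1-\alpha}{\alpha}\,\alpha(u,v)+(0,0)
\]
shows immediately that $(u,v)\notin gr(E_{A,\alpha(u,v)})$, with no case analysis and no need to perturb coordinates separately.

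What your approach buys is an explicit, coordinate-wise construction that does not rely on the (perhaps not entirely obvious) fact that $z\in K_{A,z}$; what the paper's approach buys is a two-line argument that makes the role of radiancy transparent and avoids the existence argument for the auxiliary separating point as well as the special treatment of zero versus nonzero components of $v$.
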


\begin{proof}
The "if" statement is an immediate consequence of Proposition \ref%
{SobreLasFuv} and the fact that the properties of being increasing,
co-radiant, taking only normal values and having a closed graph are
preserved by intersections. To prove the converse, we will actually prove
the equivalent equality%
\begin{equation}
gr(F)=\bigcap_{\left( x,y\right) \in \mathbb{R}_{+}^{n}\times \mathbb{R}%
_{+}^{m}\setminus A}gr\left( E_{A,\left( x,y\right) }\right) .
\label{PrimeraRepresentacion}
\end{equation}%
Assume that $F$ is ICR, takes only normal values and has a closed graph, and
define $A$ by (\ref{Zeta}). By propositions \ref{caracterizaCreciente}, \ref%
{caracterizacionRadiante} and \ref{caracterizaNormalvalues}, we have $A\in 
\mathcal{A}.$ One clearly has the inclusion $\subset $ in (\ref%
{PrimeraRepresentacion}). To prove the opposite inclusion, let $\left(
x,y\right) \in \mathbb{R}_{+}^{n}\times \mathbb{R}_{+}^{m}\setminus gr(F),$
and take $\alpha \in \left( 0,1\right) $ such that $\alpha (x,y)\in \mathbb{R%
}_{+}^{n}\times \mathbb{R}_{+}^{m}\setminus A.$ Then, as $\left( x,y\right)
=\alpha (x,y)+\frac{1-\alpha }{\alpha }\alpha (x,y)+\left( 0,0\right) \in
\alpha (x,y)+K_{A,\alpha \left( x,y\right) }\setminus \left\{ \left(
0,0\right) \right\} +(-\mathbb{R}_{+}^{n})\times \mathbb{R}_{+}^{m},$ we
have $\left( x,y\right) \notin gr(E_{A,\alpha (x,y)}).$ This proves the
inclusion $\supset $ in (\ref{PrimeraRepresentacion}) and hence the equality.
\end{proof}

\bigskip

Given $F:\mathbb{R}_{+}^{n}\rightrightarrows \mathbb{R}_{+}^{m}$, for each $%
\ell \in \mathbb{R}_{+}^{m}\setminus \{0\}$ we define $\psi _{\ell }:\mathbb{%
R}_{+}^{n}\rightarrow \mathbb{R}_{+}\cup \left\{ +\infty \right\} $ by 
\begin{equation*}
\psi _{\ell }(x):=\sigma _{F(x)}(\ell ).
\end{equation*}%
We will need the following version of the Maximum Theorem.

\begin{lemma}
\label{MaxTh} \cite[Theorem 1.4.16]{Aubin} Let $C:\mathbb{R}%
_{+}^{n}\rightrightarrows \mathbb{R}_{+}^{m}$ be u.s.c. and such that it
takes only nonempty compact values, and $f:gr(C)\rightarrow \mathbb{R}$ be
upper semicontinuous.Then the function $f^{\ast }:\mathbb{R}%
_{+}^{n}\rightarrow \mathbb{R}$ defined by $f^{\ast }(x):=\displaystyle%
\max_{y\in C(x)}f(x,y)$ is upper semicontinuous.
\end{lemma}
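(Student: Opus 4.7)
My plan is to prove upper semicontinuity sequentially, exploiting the compactness of the values of $C$ together with the sequential characterization of upper semicontinuity stated earlier in the paper.

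First, I would verify that $f^{\ast}$ is well defined and real-valued on $dom(C)=\mathbb{R}_+^n$: since $C(x)$ is nonempty and compact and $f(x,\cdot)$ is upper semicontinuous on $C(x)$ (as the restriction of an u.s.c. function), it attains its supremum, so replacing $\sup$ by $\max$ in the definition is legitimate.

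To prove upper semicontinuity at an arbitrary $x\in\mathbb{R}_+^n$, I would argue sequentially. Fix a sequence $\{x_k\}\subset\mathbb{R}_+^n$ with $x_k\to x$ and, for each $k$, select $y_k\in C(x_k)$ achieving the maximum, so that $f^{\ast}(x_k)=f(x_k,y_k)$. The key step is to invoke the sequential characterization of upper semicontinuity for compact-valued mappings (Corollary \ref{char usc}, or equivalently the preceding proposition from Hildenbrand): since $C$ is u.s.c.\ at $x$ and takes compact values, the sequence $\{y_k\}$ must have a subsequence $\{y_{k_j}\}$ converging to some $y\in C(x)$. Then $(x_{k_j},y_{k_j})\to(x,y)\in gr(C)$ and, by the upper semicontinuity of $f$,
\begin{equation*}
\limsup_{j\to+\infty} f^{\ast}(x_{k_j})=\limsup_{j\to+\infty} f(x_{k_j},y_{k_j})\leq f(x,y)\leq f^{\ast}(x).
\end{equation*}
Applying the same argument to an arbitrary subsequence of $\{f^{\ast}(x_k)\}$ along which the $\limsup$ is realized shows that $\limsup_{k\to+\infty} f^{\ast}(x_k)\leq f^{\ast}(x)$, which is precisely upper semicontinuity of $f^{\ast}$ at $x$.

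The only delicate point is making sure that the subsequence extraction works along the \emph{full} sequence $\{f^{\ast}(x_k)\}$, not just along one particular choice; the standard trick is to pass first to a subsequence along which $f^{\ast}(x_k)$ converges to $\limsup_k f^{\ast}(x_k)$, and then extract the further subsequence from $\{y_k\}$ as above. I do not anticipate any serious obstacle beyond this routine bookkeeping: everything reduces to the fact that u.s.c.\ compact-valued mappings are sequentially compact in the appropriate sense, a property already established earlier in the preliminaries of this paper.
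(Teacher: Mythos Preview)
Your argument is correct and is in fact the standard sequential proof of this version of the Maximum Theorem. Note, however, that the paper does not prove this lemma at all: it is simply quoted from \cite[Theorem 1.4.16]{Aubin} as a known result, with no accompanying proof. So there is nothing to compare your approach against---you are supplying a proof where the paper merely gives a citation.
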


\begin{proposition}
\label{ParaLasiguiente}Let $F:\mathbb{R}_{+}^{n}\rightrightarrows \mathbb{R}%
_{+}^{m}$ and $\ell \in \mathbb{R}_{+}^{m}\setminus \{0\}.$

\begin{itemize}
\item[(i)] If $F$ is co-radiant, then $\psi _{\ell }$ is co-radiant. The
converse holds true if $F$ has a nonempty graph and takes only closed normal
values.

\item[(ii)] If $F$ is increasing, then $\psi _{\ell }$ is increasing. The
converse holds true if $F$ takes only closed normal values.

\item[(iii)] If $F$ is u.s.c. and takes only nonempty compact values, then $%
\psi _{\ell }$ is finite-valued and upper semicontinuous.
\end{itemize}
\end{proposition}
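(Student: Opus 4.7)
All three statements should follow from the support-function dictionary developed in Proposition \ref{properties sigma}. The forward implications in (i) and (ii) use only the monotonicity of $\sigma$ in its set argument (immediate from its definition as a supremum) and the positive homogeneity $\sigma_{tC}=t\sigma_{C}$ from Proposition \ref{properties sigma}(vi). The converses use Proposition \ref{properties sigma}(viii) to promote pointwise inequalities between support functions to set inclusions, which is exactly why we need the values of $F$ to be closed and normal.

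For (i), if $F$ is co-radiant, then $F(tx)\supset tF(x)$ for $t\in(0,1]$ gives
\[
\psi_{\ell}(tx)=\sigma_{F(tx)}(\ell)\geq \sigma_{tF(x)}(\ell)=t\sigma_{F(x)}(\ell)=t\psi_{\ell}(x).
\]
For the converse (read, as is standard, as ``$\psi_{\ell}$ is co-radiant for every $\ell\in\mathbb{R}_{+}^{m}\setminus\{0\}$''), I would fix $x\in\mathbb{R}_{+}^{n}$ and $t\in(0,1]$ and run the same chain in reverse: the hypothesis gives $\sigma_{tF(x)}(\ell)=t\psi_{\ell}(x)\leq \psi_{\ell}(tx)=\sigma_{F(tx)}(\ell)$ on all of $\mathbb{R}_{+}^{m}\setminus\{0\}$, and the inequality at $\ell=0$ is trivial. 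Proposition \ref{properties sigma}(viii) applied with $A:=tF(x)$ and $B:=F(tx)$ (closed and normal by hypothesis) then yields $tF(x)\subset F(tx)$, which is precisely co-radiance of $F$. Statement (ii) is completely analogous: the forward direction uses $F(x)\subset F(y)\Rightarrow \sigma_{F(x)}\leq \sigma_{F(y)}$, and the converse reads this backwards via Proposition \ref{properties sigma}(viii) with $A:=F(x)$ and $B:=F(y)$.

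For (iii), the min-type function $\langle \ell,\cdot \rangle$ is a finite minimum of continuous coordinate functions, hence continuous on $\mathbb{R}_{+}^{m}$. Since $F(x)$ is nonempty and compact, the supremum defining $\psi_{\ell}(x)=\sigma_{F(x)}(\ell)$ is attained, so $\psi_{\ell}$ is finite-valued. Upper semicontinuity then follows by a direct application of Lemma \ref{MaxTh} with $C:=F$ (u.s.c.\ with nonempty compact values) and $f(x,y):=\langle \ell,y\rangle$ (continuous, hence u.s.c.\ on $gr(F)$); the resulting $f^{\ast}$ is exactly $\psi_{\ell}$.

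The main obstacle I anticipate is only a minor technicality in the converses, namely that Proposition \ref{properties sigma}(viii) is cleanest when the ``target'' set is nonempty. By normality, however, any nonempty closed normal value automatically contains $0$, so the degenerate case $F(tx)=\emptyset$ (or $F(y)=\emptyset$) forces $tF(x)$ (respectively $F(x)$) to be either empty or contained in $\{0\}$, which can be dispatched by inspection and does not affect the main inclusion.
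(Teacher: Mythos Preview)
Your proof is correct and follows essentially the same route as the paper: the forward implications via monotonicity and positive homogeneity of $\sigma$, the converses via Proposition \ref{properties sigma}(viii) (with the same ``for every $\ell$'' reading you make explicit), and part (iii) via Lemma \ref{MaxTh} applied to $f(x,y)=\langle\ell,y\rangle$. The only cosmetic difference is that in the converse of (i) the paper uses the equivalent $\lambda\geq 1$ formulation, so its closed normal target set in Proposition \ref{properties sigma}(viii) is $\lambda F(x)$ rather than your $F(tx)$; the paper does not pause over the empty-value edge case you flag.
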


\begin{proof}
(i) For $x\in \mathbb{R}_{+}^{n}$ and $t\in \left( 0,1\right] $, one has $%
F(tx)\supset tF(x)$ and therefore%
\begin{equation*}
\psi _{\ell }(tx)=\sigma _{F(tx)}(\ell )\geq \sigma _{tF(x)}(\ell )=t\sigma
_{F(x)}(\ell )=t\psi _{\ell }(x).
\end{equation*}%
Conversely, assume that $F$ has a nonempty graph and takes only closed
normal values and $\psi _{\ell }$ is co-radiant, and let $x\in \mathbb{R}%
_{+}^{n}$ and $\lambda \geq 1.$ Using Proposition \ref{properties sigma}%
(vi), we obtain%
\begin{equation*}
\sigma _{F\left( \lambda x\right) }\left( l\right) =\psi _{\ell }(\lambda
x)\leq \lambda \psi _{\ell }(x)=\lambda \sigma _{F\left( x\right) }\left(
l\right) =\sigma _{\lambda F\left( x\right) }\left( l\right) ,
\end{equation*}%
that is, $\sigma _{F\left( \lambda x\right) }\leq \sigma _{\lambda F\left(
\lambda x\right) };\ $hence, since $\ell \in \mathbb{R}_{+}^{m}\setminus
\{0\}$ is arbitrary, by Proposition \ref{properties sigma}(viii) we have $%
F(\lambda x)\subset \lambda F(x),$ which proves that $F$ is co-radiant.

(ii) For $x,x^{\prime }\in \mathbb{R}_{+}^{n}$ such that $x\leq x^{\prime }$%
, one has $F(x)\subset F(x^{\prime })$ and therefore $\psi _{\ell
}(x)=\sigma _{F(x)}(\ell )\leq \sigma _{F(x^{\prime })}(\ell )=\psi _{\ell
}(x^{\prime }).$ Conversely, assume that $F$ takes only closed normal values
and $\psi _{\ell }$ is increasing, and let $x,x^{\prime }\in \mathbb{R}%
_{+}^{n}$ be such that $x\leq x^{\prime }.$ For every $l\in \mathbb{R}%
_{+}^{n}\setminus \{0\},$ we have%
\begin{equation*}
\sigma _{F\left( x\right) }\left( l\right) =\psi _{\ell }(x)\leq \psi _{\ell
}(x^{\prime })=\sigma _{F\left( x^{\prime }\right) }\left( l\right) ,
\end{equation*}%
that is, $\sigma _{F\left( x\right) }\leq \sigma _{F\left( x^{\prime
}\right) };\ $hence, by Proposition \ref{properties sigma}(viii), we have $%
F(x)\subset F(x^{\prime }),$ which proves that $F$ is increasing.

(iii)\ Apply Lemma \ref{MaxTh} with $X:=\mathbb{R}_{+}^{n},$ $Y:=\mathbb{R}%
_{+}^{m},$ $C:=F$ and $f\left( x,y\right) :=\,\langle \ell ,y\rangle .$
\end{proof}

\bigskip

Following \cite{Rubinov}, for $k\in \mathbb{R}_{+}^{n}$ and $c\in \mathbb{R}%
_{+},$ we set%
\begin{equation*}
h_{k,c}^{+}(x):=\max \{\displaystyle\max_{i\in I}k_{i}x_{i},\,c\}.
\end{equation*}%
We further define%
\begin{equation*}
\check{h}_{k,c}^{+}(x):=\left\{ 
\begin{array}{c}
h_{k,c}^{+}(x)\text{ if }I_{+}\left( x\right) \subset I_{+}\left( k\right)
\\ 
+\infty \text{ \qquad otherwise.\qquad }%
\end{array}%
\right.
\end{equation*}%
For $l\in \mathbb{R}_{+}^{m}\setminus \{0\},$ $k\in \mathbb{R}_{+}^{n}$ and $%
c\in \mathbb{R}_{+},$ we introduce the mapping $\Delta _{l,k,c}:\mathbb{R}%
_{+}^{n}\rightrightarrows \mathbb{R}_{+}^{m}$ by%
\begin{equation*}
\Delta _{l,k,c}\left( x\right) :=\{y\in \mathbb{R}_{+}^{m}\,:\,\max \left\{
\max_{i\in I}y_{i},\langle \ell ,y\rangle \right\} \leq h_{k,c}^{+}(x)\}.
\end{equation*}

In the case when $m=1,$ it is easy to see that $\Delta _{l,k,c}=\left[ h_{%
\frac{k}{\max \left\{ l,1\right\} },\frac{c}{\max \left\{ l,1\right\} }}^{+}%
\right] .$

\begin{proposition}
\label{properties elementary}For every $l\in \mathbb{R}_{+}^{m}\setminus
\{0\},$ $k\in \mathbb{R}_{+}^{n}$ and $c\in \mathbb{R}_{+},$ the mapping $%
\Delta _{l,k,c}$ is ICR, u.s.c. and takes only nonempty compact normal
values.
\end{proposition}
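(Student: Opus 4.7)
The plan is to verify each of the five listed properties of $\Delta_{l,k,c}$ directly from its defining inequality
\[
\max\{\max_{i\in I}y_{i},\langle \ell ,y\rangle \}\leq h_{k,c}^{+}(x),
\]
exploiting the monotonicity/continuity of the two functions on the left and the elementary properties of $h_{k,c}^{+}$ on the right. The reasoning separates cleanly into (a) pointwise facts about each image set $\Delta_{l,k,c}(x)$, and (b) dependence on $x$.

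For the pointwise part, nonemptiness is immediate because $y=0$ satisfies the defining inequality ($h_{k,c}^{+}(x)\geq c\geq 0$). Normality follows because both $y\mapsto \max_{i}y_{i}$ and $y\mapsto \langle \ell ,y\rangle $ are increasing in the componentwise order (the latter by the definition of the coupling min-type function), so if $0\leq y'\leq y$ and $y$ satisfies the inequality, then so does $y'$. For closedness and boundedness: the two functions on the left are continuous, so $\Delta_{l,k,c}(x)$ is a closed subset of $\mathbb{R}_+^m$, and the inequality $\max_{i}y_{i}\leq h_{k,c}^{+}(x)<+\infty $ bounds it in $\|\cdot\|_\infty$; hence it is compact.

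For the dependence on $x$, the key observation is that $h_{k,c}^{+}$ is both increasing and co-radiant as a scalar function on $\mathbb{R}_+^n$. Monotonicity is evident from $h_{k,c}^{+}(x)=\max\{\max_{i}k_{i}x_{i},c\}$. For co-radiantness, for $t\in (0,1]$,
\[
t\,h_{k,c}^{+}(x)=\max\{t\max_{i}k_{i}x_{i},tc\}\leq \max\{\max_{i}k_{i}(tx_{i}),c\}=h_{k,c}^{+}(tx),
\]
since $tc\leq c$. Consequently, if $x\leq x'$, the defining inequality for $y\in \Delta_{l,k,c}(x)$ is preserved for $x'$, giving increasingness; and if $y\in \Delta_{l,k,c}(x)$ and $t\in (0,1]$, then $ty$ satisfies $\max\{\max_{i}ty_{i},\langle \ell ,ty\rangle \}=t\max\{\max_{i}y_{i},\langle \ell ,y\rangle \}\leq t\,h_{k,c}^{+}(x)\leq h_{k,c}^{+}(tx)$, which yields $t\Delta_{l,k,c}(x)\subset \Delta_{l,k,c}(tx)$, i.e., co-radiantness.

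The remaining property, upper semicontinuity, is the least automatic step but still standard: since the values are compact, I would invoke Corollary \ref{char usc}. Given $x^{n}\to x$ and $y^{n}\in \Delta_{l,k,c}(x^{n})$, continuity of $h_{k,c}^{+}$ gives $h_{k,c}^{+}(x^{n})\to h_{k,c}^{+}(x)$, so the sequence $\{y^{n}\}$ is bounded by $\max_{i}y_{i}^{n}\leq h_{k,c}^{+}(x^{n})$; any cluster point $y$ of $\{y^{n}\}$ satisfies $\max\{\max_{i}y_{i},\langle \ell ,y\rangle \}\leq h_{k,c}^{+}(x)$ by continuity, hence lies in $\Delta_{l,k,c}(x)$. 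This gives $d_{\infty }(y^{n},\Delta_{l,k,c}(x))\to 0$ along a subsequence, and a routine argument by contradiction upgrades this to the full sequence, completing the verification via Corollary \ref{char usc}.
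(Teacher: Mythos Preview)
Your proof is correct. For nonemptiness, compactness, normality, and the ICR property, your argument is essentially identical to the paper's (same key observations: $0$ lies in every image, the left-hand side is increasing and continuous in $y$, the images sit in the cube $[0,h_{k,c}^{+}(x)]^{m}$, and $h_{k,c}^{+}$ is increasing and co-radiant).

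The one genuine difference is in the upper semicontinuity step. The paper writes $\Delta_{l,k,c}=\Delta_{l,k,c}^{1}\cap \Delta_{k,c}^{2}$, where $\Delta_{l,k,c}^{1}(x)=\{y:\langle \ell ,y\rangle \leq h_{k,c}^{+}(x)\}$ has closed graph and $\Delta_{k,c}^{2}(x)=[0,h_{k,c}^{+}(x)]^{m}$ is u.s.c.\ with compact values, and then invokes a general intersection result from \cite{AB06}. Your approach instead works directly with sequences via Corollary~\ref{char usc}: boundedness of $\{y^{n}\}$ from continuity of $h_{k,c}^{+}$, cluster points landing in $\Delta_{l,k,c}(x)$ by continuity of the defining functions, and the standard sub-subsequence contradiction to pass to the full sequence. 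Your route is more self-contained (no external reference needed), while the paper's decomposition isolates a reusable structural fact---that $\Delta_{l,k,c}$ is the intersection of a closed-graph map with a compact-valued u.s.c.\ map---which it exploits again in the proof of the next theorem.
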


\begin{proof}
Since $h_{k,c}^{+}$ is increasing and co-radiant, $\Delta _{l,k,c}$ is ICR.
Since the mapping $y\mapsto \max \left\{ \max_{i\in I}y_{i},\langle \ell
,y\rangle \right\} $ is increasing and continuous, the sets $\Delta
_{l,k,c}\left( x\right) $ are normal and closed. Since they are contained in
the cube $\left[ 0,h_{k,c}^{+}(x)\right] ^{m},$ they are also bounded, hence
compact. It is clear that they contain the origin, so they are nonempty. To
see that $\Delta _{l,k,c}$ is u.s.c., notice that it is the pointwise
intersection of the mappings $\Delta _{l,k,c}^{1}$ and $\Delta _{k,c}^{2}$
defined by%
\begin{equation*}
\Delta _{l,k,c}^{1}\left( x\right) :=\{y\in \mathbb{R}_{+}^{m}\,:\,\langle
\ell ,y\rangle \leq h_{k,c}^{+}(x)\}
\end{equation*}%
and%
\begin{equation*}
\Delta _{k,c}^{2}\left( x\right) :=\left[ 0,h_{k,c}^{+}(x)\right] ^{m},
\end{equation*}%
respectively. Since the functions $\langle \ell ,\cdot \rangle $ and $%
h_{k,c}^{+}$ are continuous, the mapping $\Delta _{l,k,c}^{1}$ has a closed
graph. The continuity of $h_{k,c}^{+}$ also implies the upper semicontinuity
of $\Delta _{k,c}^{2}.$ Hence, since $\Delta _{k,c}^{2}$ is compact-valued,
by \cite[Theorem 17.25.2]{AB06} the mapping $\Delta _{l,k,c}$ is u.s.c..
\end{proof}

\bigskip

The second main result of this paper is the following generalization of \cite%
[Theorem 5.2]{Martinez}.

\begin{theorem}
A mapping $F:\mathbb{R}_{+}^{n}\rightrightarrows \mathbb{R}_{+}^{m}$ is ICR,
u.s.c. and takes only nonempty compact normal values if, and only if, there
exists a nonempty set $T\subset \left( \mathbb{R}_{+}^{m}\setminus
\{0\}\right) \times \mathbb{R}_{+}^{n}\times \mathbb{R}_{+}$ such that%
\begin{equation}
F=\displaystyle\bigcap_{\left( l,k,c\right) \in T}\Delta _{l,k,c}.
\label{FcomoInters2da}
\end{equation}
\end{theorem}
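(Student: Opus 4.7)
The plan is to prove the two directions of the equivalence separately.

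For the \emph{if} direction, set $F:=\bigcap_{(l,k,c)\in T}\Delta_{l,k,c}$. By Proposition \ref{properties elementary}, each $\Delta_{l,k,c}$ is ICR, upper semicontinuous, and takes only nonempty compact normal values. The ICR property is preserved by pointwise intersection in view of Proposition \ref{utilparalaPropsiguiente1}, and so is normality of the values in view of Proposition \ref{propiedades}(iii). Each $F(x)$ is nonempty because $0\in\Delta_{l,k,c}(x)$ for every $(l,k,c)\in T$ (as $h_{k,c}^{+}\ge 0$), and compact because it is closed and contained in the compact set $\Delta_{l,k,c}(x)$ for any fixed $(l,k,c)\in T$. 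Finally, $F$ has closed graph (an intersection of closed sets) and is dominated by the u.s.c.\ compact-valued mapping $\Delta_{l,k,c}$ for any fixed $(l,k,c)\in T$, hence is itself upper semicontinuous.

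For the \emph{only if} direction, assume $F$ satisfies the stated properties and put
\begin{equation*}
T:=\{(l,k,c)\in(\mathbb{R}_{+}^{m}\setminus\{0\})\times\mathbb{R}_{+}^{n}\times\mathbb{R}_{+}:F(x)\subset\Delta_{l,k,c}(x)\text{ for every }x\in\mathbb{R}_{+}^{n}\}.
\end{equation*}
The inclusion $F\subset\bigcap_{(l,k,c)\in T}\Delta_{l,k,c}$ is trivial, so it suffices to check that for every pair $(x_{0},y_{0})\in\mathbb{R}_{+}^{n}\times\mathbb{R}_{+}^{m}\setminus gr(F)$ there exists $(l,k,c)\in T$ with $y_{0}\notin\Delta_{l,k,c}(x_{0})$. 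Since $F(x_{0})$ is closed and normal and $y_{0}\notin F(x_{0})$, Proposition \ref{SeparNormalCerr} furnishes a nonzero $l\in\mathbb{R}_{+}^{m}$ with $\sigma_{F(x_{0})}(l)\le 1<\langle l,y_{0}\rangle$.

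The central auxiliary object is the scalar function
\begin{equation*}
\varphi_{l}(x):=\sup_{y\in F(x)}\max\{\max_{i}y_{i},\,\langle l,y\rangle\}=\max\{\tau(x),\sigma_{F(x)}(l)\},
\end{equation*}
where $\tau(x):=\sup_{y\in F(x)}\max_{i}y_{i}=\max_{i}\sigma_{F(x)}(e_{i})$, which is finite because $F(x)$ is compact. Applying Proposition \ref{ParaLasiguiente} to $l$ and to each $e_{i}$ shows that $\sigma_{F(\cdot)}(l)$ and each $\sigma_{F(\cdot)}(e_{i})$ are ICR, finite-valued, and upper semicontinuous, so $\varphi_{l}$ inherits these properties. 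Now rescale $l$ to $l':=\alpha l$ with $\alpha>\tau(x_{0})$; since $\langle l,y_{0}\rangle>1$, one obtains
\begin{equation*}
\varphi_{l'}(x_{0})\le\max\{\tau(x_{0}),\alpha\sigma_{F(x_{0})}(l)\}\le\max\{\tau(x_{0}),\alpha\}=\alpha<\alpha\langle l,y_{0}\rangle=\langle l',y_{0}\rangle.
\end{equation*}
Invoking the single-variable representation theorem \cite[Theorem 5.2]{Martinez} for the ICR upper semicontinuous function $\varphi_{l'}$, one produces $(k,c)\in\mathbb{R}_{+}^{n}\times\mathbb{R}_{+}$ satisfying $\varphi_{l'}\le h_{k,c}^{+}$ pointwise and $h_{k,c}^{+}(x_{0})<\langle l',y_{0}\rangle$. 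The first inequality yields $F(x)\subset\Delta_{l',k,c}(x)$ for every $x$, so $(l',k,c)\in T$; the second gives $\max\{\max_{i}(y_{0})_{i},\langle l',y_{0}\rangle\}\ge\langle l',y_{0}\rangle>h_{k,c}^{+}(x_{0})$, i.e.\ $y_{0}\notin\Delta_{l',k,c}(x_{0})$, as required.

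The main obstacle is identifying the right scalar proxy $\varphi_{l'}$ that simultaneously encodes the coordinate cap $\max_{i}y_{i}$ and the min-type coupling $\langle l',y\rangle$ from the definition of $\Delta_{l,k,c}$, and rescaling $l$ so that the strict separation $\sigma_{F(x_{0})}(l)<\langle l,y_{0}\rangle$ provided by Proposition \ref{SeparNormalCerr} survives the passage from $\sigma_{F(x_{0})}(l)$ to $\varphi_{l'}(x_{0})$. Once this is in place, the single-variable representation theorem of \cite{Martinez} does the rest.
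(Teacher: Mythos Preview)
Your proof is correct. The ``if'' direction mirrors the paper's, and for the ``only if'' direction you and the paper both reduce to \cite[Theorem 5.2]{Martinez}, but you organize the reduction differently.

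The paper proceeds globally: it uses Corollary~\ref{RepresSoporte} to write $F(x)=\bigcap_{\ell}\{y:\langle\ell,y\rangle\le\psi_\ell(x)\}$, applies \cite[Theorem 5.2]{Martinez} to each scalar ICR function $\psi_\ell=\sigma_{F(\cdot)}(\ell)$ to get $\psi_\ell=\inf_{(k,c)\in T_\ell}h_{k,c}^{+}$, and then assembles $T=\bigcup_\ell\{\ell\}\times T_\ell$; a separate step involving the unit vectors $e^i$ is needed to pass from the half-space maps $\Delta^{1}_{l,k,c}$ to the box-truncated maps $\Delta_{l,k,c}$. Your route is a pointwise separation argument: you take $T$ to be the set of \emph{all} triples $(l,k,c)$ with $F\subset\Delta_{l,k,c}$, and for each $(x_0,y_0)\notin gr(F)$ you manufacture a single separating triple. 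The two genuine ideas you add are (i) the hybrid scalar function $\varphi_{l}=\max\{\tau,\psi_l\}$, which simultaneously encodes the coordinate cap $\max_i y_i$ and the min-type coupling, so that $\varphi_{l'}\le h_{k,c}^{+}$ immediately yields $F\subset\Delta_{l',k,c}$; and (ii) the rescaling $l'=\alpha l$ with $\alpha>\tau(x_0)$, which ensures the strict gap $\sigma_{F(x_0)}(l)\le 1<\langle l,y_0\rangle$ from Proposition~\ref{SeparNormalCerr} survives after taking the maximum with $\tau(x_0)$. This packaging is arguably cleaner than the paper's, since the passage from $\Delta^{1}$ to $\Delta$ is built into $\varphi_{l'}$ from the start rather than handled as an afterthought; the price is that your $T$ is described implicitly rather than as an explicit union of the Martinez index sets $T_\ell$. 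One small point worth stating explicitly: $T\ne\emptyset$ follows because $F$ takes compact values, so $gr(F)\subsetneq\mathbb{R}_{+}^{n}\times\mathbb{R}_{+}^{m}$ and your separation argument produces at least one triple.
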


\begin{proof}
Assume that $F$ is ICR, u.s.c. and takes only nonempty compact normal
values, and let $x\in \mathbb{R}_{+}^{n}$. By Corollary \ref{RepresSoporte},
we have%
\begin{equation}
\begin{array}{rl}
F(x) & =\{y\in \mathbb{R}_{+}^{m}\,:\,\langle \ell ,y\rangle \leq \sigma
_{F(x)}(\ell ),\;\forall \ell \in \mathbb{R}_{+}^{m}\setminus \{0\}\} \\ 
& =\{y\in \mathbb{R}_{+}^{m}\,:\,\langle \ell ,y\rangle \leq \psi _{\ell
}(x),\forall \ell \in \mathbb{R}_{+}^{m}\setminus \{0\}\} \\ 
& =\displaystyle\bigcap_{\ell \in \mathbb{R}_{+}^{m}\setminus \{0\}}\{y\in 
\mathbb{R}_{+}^{m}\;:\langle \ell ,y\rangle \leq \psi _{\ell }(x)\}.%
\end{array}
\label{F(x)=Int}
\end{equation}%
On the other hand, by Proposition \ref{ParaLasiguiente} and \cite[Theorem 5.2%
]{Martinez}, for every $\ell \in \mathbb{R}_{+}^{m}\setminus \{0\}$ there
exists a nonempty set $T_{\ell }\subset \mathbb{R}_{+}^{n}\times \mathbb{R}%
_{+}$ such that%
\begin{equation*}
\psi _{\ell }(x):=\displaystyle\inf_{(k,c)\in T_{\ell }}h_{k,c}^{+}(x).
\end{equation*}%
From this equality and (\ref{F(x)=Int}), we obtain that $F(x)=\bigcap_{\ell
\in \mathbb{R}_{+}^{m}\setminus \{0\}}\bigcap_{(k,c)\in T_{\ell }}\Delta
_{l,k,c}^{1}\left( x\right) .$ Hence $F(x)\subset
\bigcap_{i=1}^{m}\bigcap_{(k,c)\in T_{e^{i}}}\Delta _{e^{i},k,c}^{1}\left(
x\right) =\Delta _{k,c}^{2}\left( x\right) ;$ here we denote by $e^{i}$ the $%
i$-th unit vector.We therefore have%
\begin{eqnarray*}
F(x) &=&F(x)\cap \Delta _{k,c}^{2}\left( x\right) =\bigcap_{\ell \in \mathbb{%
R}_{+}^{m}\setminus \{0\}}\bigcap_{(k,c)\in T_{\ell }}\left( \Delta
_{l,k,c}^{1}\left( x\right) \cap \Delta _{k,c}^{2}\left( x\right) \right) \\
&=&\bigcap_{\ell \in \mathbb{R}_{+}^{m}\setminus \{0\}}\bigcap_{(k,c)\in
T_{\ell }}\Delta _{l,k,c}\left( x\right) .
\end{eqnarray*}%
From this equality we immediately obtain (\ref{FcomoInters2da}), with%
\begin{equation*}
T:=\left\{ \left( l,k,c\right) \in \left( \mathbb{R}_{+}^{m}\setminus
\{0\}\right) \times \mathbb{R}_{+}^{n}\times \mathbb{R}_{+}:\left(
k,c\right) \in T_{l}\right\} .
\end{equation*}

To prove the converse, assume that (\ref{FcomoInters2da}) holds for some
nonempty set $T\subset \left( \mathbb{R}_{+}^{m}\setminus \{0\}\right)
\times \mathbb{R}_{+}^{n}\times \mathbb{R}_{+}$. Then, using Proposition \ref%
{properties elementary}, it is easy to prove that the mapping $F$ is ICR and
takes only nonempty compact normal values. Its upper semicontinuity follows
from Proposition \ref{properties elementary} and \cite[Theorem 17.25.3]{AB06}%
.
\end{proof}

\bigskip

\textbf{Acknowledgments. }Abelardo Jord\'{a}n was supported by the
Departamento de Ciencias de la Pontificia Universidad Cat\'{o}lica del Per%
\'{u}. Juan Enrique Mart\'{\i}nez-Legaz was partially supported by the
Severo Ochoa Programme for Centres of Excellence in R\&D [SEV-2015-0563]. He
is affiliated with MOVE (Markets, Organizations and Votes in Economics).
Part of this work was elaborated during several visits he made to the
Escuela de Posgrado de la Pontificia Universidad Cat\'{o}lica del Per\'{u},
to which he is grateful for the financial support received as well as for
the warm hospitality of its members.

\end{document}